\begin{document}

\title{On several kinds of sums of balancing numbers} 
\author{
Takao Komatsu\\
\small School of Science, Department of Mathematical Sciences\\[-0.8ex]
\small Zhejiang Sci-Tech University\\[-0.8ex]
\small Hangzhou 310018 China\\[-0.8ex]
\small \texttt{komatsu@zstu.edu.cn}\\\\
Gopal Krishna Panda\\
\small National Institute of Technology\\[-0.8ex]
\small Rourkela, India\\[-0.8ex]
\small \texttt{gkpanda\_nit@rediffmail.com} 
}

\date{\small 
}

\maketitle

\def\fl#1{\left\lfloor#1\right\rfloor}
\def\cl#1{\left\lceil#1\right\rceil}
\def\stf#1#2{\left[#1\atop#2\right]} 
\def\sts#1#2{\left\{#1\atop#2\right\}}

\newtheorem{theorem}{Theorem}
\newtheorem{Prop}{Proposition}
\newtheorem{Cor}{Corollary}
\newtheorem{Lem}{Lemma}

\begin{abstract}  
The balancing numbers $B_n$ ($n=0,1,\cdots$) are solutions of the binary recurrence $B_n=6B_{n-1}-B_{n-2}$ ($n\ge 2$) with $B_0=0$ and $B_1=1$. 
In this paper we show several relations about the sums of product of two balancing numbers of the type $\sum_{m=0}^n B_{k m+r}B_{k(n-m)+r}$ ($k>r\ge 0$) and the alternating sum of reciprocal of  balancing numbers $\fl{\left(\sum_{k=n}^\infty\frac{1}{B_{l k}}\right)^{-1}}$. Similar results are also obtained for Lucas-balancing numbers $C_n$ ($n=0,1,\cdots$), satisfying the binary recurrence $C_n=6C_{n-1}-C_{n-2}$ ($n\ge 2$) with $C_0=1$ and $C_1=3$.  Some binomial sums involving these numbers are also explored.  \\
\end{abstract}
{\bf AMS Subject Classification:} Primary 11B39; Secondary 11B83, 05A15, 05A19 
\\
{\bf Keywords:} Balancing numbers, Lucas-balancing numbers, reciprocal sums

\section{Introduction}
 
A positive integer $x$ is called {\it balancing number} if  
\begin{equation}  
1+2+\cdots+(x-1)=(x+1)+\cdots+(y-1) 
\label{def:balancing}
\end{equation}  
holds for some integer $y\ge x+2$. The problem of determining all balancing numbers leads to a Pell equation, whose solutions in $x$ can be described by the recurrence $B_n=6B_{n-1}-B_{n-2}$ ($n\ge 2$) with $B_0=0$ and $B_1=1$ (see \cite{behera-panda,Finkelstein}).  
One of the most general extensions of balancing numbers is when (\ref{def:balancing}) is being replaced by   
\begin{equation}  
1^k+2^k+\cdots+(x-1)^k=(x+1)^l+\cdots+(y-1)^l\,,  
\label{kpower-balancing}
\end{equation}  
where the exponents $k$ and $l$ are given positive integers. In the work of Liptai et al. \cite{LLPS} effective and non-effective finiteness theorems on (\ref{kpower-balancing}) are proved.  In \cite{KS2014} a balancing problem of ordinary binomial coefficients is studied.  

In addition, $C_n$ denotes the $n$-th {\it Lucas-balancing number}, satisfying 
 $C_n=6C_{n-1}-C_{n-2}$ ($n\ge 2$) with $C_0=1$ and $C_1=3$.

\section{Properties of balancing numbers}  

In this section, we describe several basic properties about balancing numbers and Lucas-balancing numbers.  
$B_n$ and $C_n$ are expressed as 
$$
B_n=\frac{\alpha^n-\beta^n}{4\sqrt{2}}\quad\hbox{and}\quad C_n=\frac{\alpha^n+\beta^n}{2}\quad(n\ge 0)\,,
$$ 
where $\alpha=3+2\sqrt{2}$ and $\beta=3-2\sqrt{2}$.

The balancing and Lucas-balancing numbers satisfy the identities 
\begin{equation} 
B_{n-r}B_{n+r}=B_n^2-B_r^2,\quad\hbox{and}\quad 
C_{n-r}C_{n+r}=C_n^2+C_r^2-1
\quad(n\ge r)\,, 
\label{relation1}
\end{equation} 
respectively (\cite{ray2009}). In particular, by putting $r=1$,  we have
\begin{equation} 
B_{n-1}B_{n+1}=B_n^2-1,\quad\hbox{and}\quad 
C_{n-1}C_{n+1}=C_n^2+8
\quad(n\ge 1)\,, 
\label{relation2}
\end{equation} 
More general relations for more generalized numbers can be seen in \cite[Theorem 2.14, Theorem 2.16]{SK}.  
In \cite{KK}, several properties of balancing and cobalancing numbers, square, triangular, and oblong numbers, and other numbers are derived. Their main results are that if $B_n,B_m>1$, then $B_n B_m$ is not a balancing number.  

The identity 
$$ 
B_1+B_3+\cdots+B_{2n-1}=B_n^2 
$$ 
gives  
$$ 
B_{2n-1}=B_n^2-B_{n-1}^2\,. 
$$  
In general, for $a,b\ge 0$ we have 
$$ 
B_{a+b+1}=B_{a+1}B_{b+1}-B_{a}B_{b}\,. 
$$ 

There are some relations between $B_n$ and $C_n$, including 
\begin{equation}  
C_n=\sqrt{8 B_n^2+1}
\label{eq:c-b}
\end{equation} 
(\cite{panda2009}).

In \cite{panda2009}, several basic relations, which are analogous of $\sin(m\pm n)$, $\cos(m\pm n)$, $\sin(-n)$ and $\cos(-n)$ are given.  
\begin{Lem}  
For integers $m$ and $n$ with $m\ge n$ 
\begin{enumerate}  
\item $B_{n\pm m}=B_n C_m\pm B_m C_n$ 
\item $C_{n\pm m}=C_n C_m\pm 8 B_m B_n$  
\item $B_{-n}=-B_n$ 
\item $C_{-n}=C_n$ 
\end{enumerate}  
\label{lem2009} 
\end{Lem}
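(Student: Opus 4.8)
The plan is to use the Binet-type formulas $B_n=(\alpha^n-\beta^n)/(4\sqrt 2)$ and $C_n=(\alpha^n+\beta^n)/2$ recorded above, together with the single algebraic fact that $\alpha\beta=(3+2\sqrt 2)(3-2\sqrt 2)=9-8=1$. First I would dispose of parts (3) and (4). Since $\alpha\beta=1$ we have $\alpha^{-1}=\beta$ and $\beta^{-1}=\alpha$, hence $\alpha^{-n}=\beta^n$ and $\beta^{-n}=\alpha^n$; substituting into the Binet formulas gives at once $B_{-n}=(\beta^n-\alpha^n)/(4\sqrt 2)=-B_n$ and $C_{-n}=(\alpha^n+\beta^n)/2=C_n$. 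This simultaneously shows that the Binet representations are valid for all integer indices, which is exactly what is needed to make sense of the lower signs in (1) and (2).

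For the upper-sign version of (1), I would expand $B_nC_m+B_mC_n$ directly from the Binet forms: each of the two products contributes $\alpha^{n+m}/(8\sqrt2)$ and $-\beta^{n+m}/(8\sqrt2)$, while the mixed terms $\pm\alpha^n\beta^m/(8\sqrt2)$ and $\pm\alpha^m\beta^n/(8\sqrt2)$ occur with opposite signs in the two products and cancel, leaving $(\alpha^{n+m}-\beta^{n+m})/(4\sqrt2)=B_{n+m}$. The upper-sign version of (2) is the same bookkeeping applied to $C_nC_m+8B_mB_n$: the coefficient $8$ is exactly what makes the $8B_mB_n$ term equal to $(\alpha^{n+m}-\alpha^m\beta^n-\alpha^n\beta^m+\beta^{n+m})/4$, so that adding it to $C_nC_m=(\alpha^{n+m}+\alpha^n\beta^m+\alpha^m\beta^n+\beta^{n+m})/4$ cancels the cross terms and yields $(\alpha^{n+m}+\beta^{n+m})/2=C_{n+m}$.

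Rather than repeat the algebra for the lower signs, I would deduce the $n-m$ identities from the $n+m$ identities by replacing $m$ with $-m$ and invoking parts (3) and (4): $B_{n-m}=B_nC_{-m}+B_{-m}C_n=B_nC_m-B_mC_n$, and similarly $C_{n-m}=C_nC_{-m}+8B_{-m}B_n=C_nC_m-8B_mB_n$. (Alternatively, one checks directly that the uncancelled terms in $B_nC_m-B_mC_n$ combine to $(\alpha^n\beta^m-\alpha^m\beta^n)/(4\sqrt2)$ and then uses $\alpha^n\beta^m=\alpha^{n-m}(\alpha\beta)^m=\alpha^{n-m}$, $\alpha^m\beta^n=\beta^{n-m}(\alpha\beta)^m=\beta^{n-m}$, and likewise for $C$.) There is no genuine obstacle here: the entire lemma is a routine consequence of the Binet representation, and the only point that warrants a moment's care is the passage to negative indices, which the relation $\alpha\beta=1$ settles cleanly.
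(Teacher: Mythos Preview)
Your argument is correct: the Binet representations together with $\alpha\beta=1$ immediately give (3) and (4), and the addition formulas (1) and (2) follow by expanding the right-hand sides and watching the cross terms cancel (or, for the lower signs, by substituting $-m$ for $m$ and invoking (3)--(4)). Note, however, that the paper does not supply its own proof of this lemma at all; it simply records the identities and attributes them to \cite{panda2009}, so there is no in-paper argument to compare yours against. Your Binet-formula derivation is the standard one and is exactly what one would expect the cited reference to contain.
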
 

From Lemma \ref{lem2009}, we can easily get the following relations, which are used in the next section.  

\begin{Prop}
If $m$ and $n$ are natural numbers, then  
\begin{enumerate}  
\item $B_{n+m}-2 B_n C_m=
\begin{cases} 
-B_{n-m}&\text{if $n\ge m$};\\
B_{m-n}&\text{if $n<m$}. 
\end{cases}$  
\item $C_{n+m}-2 C_n C_m=
\begin{cases} 
-C_{n-m}&\text{if $n\ge m$};\\
-C_{m-n}&\text{if $n<m$}. 
\end{cases}$  
\end{enumerate} 
\label{prp2009} 
\end{Prop}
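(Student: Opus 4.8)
The plan is to derive both identities directly from Lemma~\ref{lem2009} by isolating the "$\pm$'' structure. For part~(1), I start from the two instances of Lemma~\ref{lem2009}(1), namely $B_{n+m}=B_nC_m+B_mC_n$ and $B_{n-m}=B_nC_m-B_mC_n$ when $n\ge m$. Adding these gives $B_{n+m}+B_{n-m}=2B_nC_m$, i.e. $B_{n+m}-2B_nC_m=-B_{n-m}$, which is the first case. When $n<m$, the expression $B_{n-m}$ is not directly covered by the "$m\ge n$'' hypothesis of the lemma, so instead I write $B_{n-m}=-B_{m-n}$ using Lemma~\ref{lem2009}(3), and apply Lemma~\ref{lem2009}(1) with the roles of the indices arranged so that the larger one plays the role of "$n$''; concretely $B_{m-n}=B_mC_n-B_nC_m$, hence $2B_nC_m-B_{n+m}=2B_nC_m-(B_nC_m+B_mC_n)=B_nC_m-B_mC_n=-B_{m-n}$, giving $B_{n+m}-2B_nC_m=B_{m-n}$, the second case.

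For part~(2) the argument is the mirror image using Lemma~\ref{lem2009}(2) and (4). For $n\ge m$ I add $C_{n+m}=C_nC_m+8B_mB_n$ and $C_{n-m}=C_nC_m-8B_mB_n$ to obtain $C_{n+m}+C_{n-m}=2C_nC_m$, i.e. $C_{n+m}-2C_nC_m=-C_{n-m}$. For $n<m$, I use $C_{n-m}=C_{m-n}$ from Lemma~\ref{lem2009}(4) together with $C_{m-n}=C_mC_n-8B_nB_m$ (Lemma~\ref{lem2009}(2) with the larger index first); then $2C_nC_m-C_{n+m}=2C_nC_m-(C_nC_m+8B_mB_n)=C_nC_m-8B_mB_n=C_{m-n}=C_{n-m}$, so $C_{n+m}-2C_nC_m=-C_{m-n}$, as claimed. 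Note the sign in part~(2) is the same in both cases precisely because $C$ is even under negation while $B$ is odd, which explains the asymmetry between the two parts.

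The only subtlety — and the one place one must be slightly careful — is the bookkeeping of which index is allowed to be "$n$'' versus "$m$'' in the hypothesis $m\ge n$ of Lemma~\ref{lem2009}; in the case $n<m$ one cannot apply part~(1) or~(2) of the lemma with the difference $n-m$ directly, so the parity relations (3) and~(4) must be invoked first to flip the sign and reduce to a genuinely nonnegative index difference. Once that is handled, everything is a two-line linear combination, so there is no real obstacle; the result is essentially a restatement of Lemma~\ref{lem2009} in the form most convenient for the telescoping-type manipulations in the next section.
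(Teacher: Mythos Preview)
Your proof is correct and follows exactly the approach the paper indicates: the paper does not spell out the argument but simply states that the proposition follows easily from Lemma~\ref{lem2009}, and your derivation---adding the $+$ and $-$ instances of Lemma~\ref{lem2009}(1) and~(2), together with the parity rules~(3) and~(4) to handle the $n<m$ case---is precisely the intended computation. Your remark about the hypothesis $m\ge n$ in Lemma~\ref{lem2009} is a fair caution, though in fact the identities there are consistent under swapping $m$ and $n$ once~(3) and~(4) are in force, so the bookkeeping is ultimately harmless.
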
 

In \cite{ray2012}, several interesting identities are given for balancing $B_n$ and Lucas-balancing numbers $C_n$.  Some of them are listed below. $\left(\frac{p}{8}\right)$ denotes the Legendre symbol. 

\begin{Prop} 
Let $m$ and $n$ be positive integers. Then 
\begin{enumerate}  
\item $\gcd(B_m,B_n)=B_{\gcd(m,n)}$. 
\item If $p$ is an odd prime, then $C_p\equiv 3\pmod p$ and $B_p\equiv\left(\frac{p}{8}\right)\pmod p$.  
\item For any positive integer $m$, $B_{2 m}\equiv 0\pmod{C_m}$, $B_{2 m-1}\equiv 1\pmod{C_{m}}$.  
\end{enumerate}  
\label{prp2012} 
\end{Prop}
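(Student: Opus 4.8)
The three assertions are of quite different character, so I would prove them separately. Assertion (3) is immediate from the addition formulas: Lemma~\ref{lem2009}(1) with both indices equal to $m$ gives $B_{2m}=B_mC_m+B_mC_m=2B_mC_m$, so $C_m\mid B_{2m}$; and with indices $m$ and $m-1$ it gives $B_{2m-1}=B_{m-1}C_m+B_mC_{m-1}\equiv B_mC_{m-1}\pmod{C_m}$, while Lemma~\ref{lem2009}(2),(4) give $C_{m-1}=3C_m-8B_m$, so $B_{2m-1}\equiv B_m(3C_m-8B_m)\equiv-8B_m^2\pmod{C_m}$; since $C_m^2=8B_m^2+1$ by \eqref{eq:c-b}, this is $\equiv 1-C_m^2\equiv1\pmod{C_m}$.

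For (1), the ``strong divisibility'' property, I would first show $m\mid n\Rightarrow B_m\mid B_n$ by induction on the quotient $k=n/m$, using the identity $B_{(k+1)m}=B_mC_{km}+B_{km}C_m$ from Lemma~\ref{lem2009}(1) (the base case $k=1$ being trivial); next I would observe that $\gcd(B_n,C_n)=1$ for every $n$, since by \eqref{eq:c-b} any common divisor of $B_n$ and $C_n$ divides $C_n^2-8B_n^2=1$. Then for $n>m$ write $n=qm+r$ with $0\le r<m$; Lemma~\ref{lem2009}(1) gives $B_n=B_{qm}C_r+B_rC_{qm}$, and since $B_m\mid B_{qm}$ while $\gcd(B_m,C_{qm})=1$ (a common divisor of $B_m$ and $C_{qm}$ would divide both $B_{qm}$ and $C_{qm}$, hence $1$), reducing modulo $B_m$ gives $\gcd(B_m,B_n)=\gcd(B_m,B_rC_{qm})=\gcd(B_m,B_r)$. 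This is exactly one step of the Euclidean algorithm transported to the indices, so iterating drives $r$ down to $\gcd(m,n)$ and yields $\gcd(B_m,B_n)=B_{\gcd(m,n)}$.

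For (2) I would use the closed forms with $\alpha=3+2\sqrt2$, $\beta=3-2\sqrt2$, $\alpha+\beta=6$, $\alpha\beta=1$. In $2C_p=\alpha^p+\beta^p$, the binomial expansion leaves only even-index terms, all of which except the $k=0$ term carry a factor $\binom pk\equiv0\pmod p$ ($k=p$ cannot occur since $p$ is odd), so $C_p\equiv3^p\equiv3\pmod p$ by Fermat's little theorem. In $4\sqrt2\,B_p=\alpha^p-\beta^p$ only the odd-index terms survive; after dividing by $4\sqrt2$, the contributions with $1<k<p$ are multiples of $p$, the $k=1$ contribution equals $p\,3^{p-1}$, and the $k=p$ contribution (after tracking the powers of $2$ and $\sqrt2$) collapses to $8^{(p-1)/2}$, so $B_p\equiv8^{(p-1)/2}\pmod p$. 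Euler's criterion then gives $B_p\equiv\left(\frac8p\right)=\left(\frac2p\right)\pmod p$, which coincides with the symbol $\left(\frac p8\right)$ of the statement by the supplement to quadratic reciprocity (both equal $+1$ precisely when $p\equiv\pm1\pmod8$).

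I expect the bookkeeping in (2) to be the main obstacle: one must carry the factors $2^{k}$ and $(\sqrt2)^{k}$ through both expansions carefully so that the $k=p$ term of $B_p$ collapses exactly to $8^{(p-1)/2}$, and one must match the Euler symbol $\left(\frac2p\right)$ with the symbol $\left(\frac p8\right)$ appearing in the statement. By contrast, once the divisibility lemma and the coprimality $\gcd(B_n,C_n)=1$ are available, (1) is routine, and (3) is a short computation.
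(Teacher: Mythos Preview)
The paper does not prove this proposition; it is quoted without proof from \cite{ray2012}. Your arguments, by contrast, supply complete self-contained proofs using only the tools already available in the paper (Lemma~\ref{lem2009} and the relation \eqref{eq:c-b}), and they are correct.

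A few remarks on the details. In part~(3) the chain $B_{2m-1}\equiv B_mC_{m-1}\equiv -8B_m^2\equiv 1-C_m^2\equiv 1\pmod{C_m}$ is clean and goes through exactly as you wrote. In part~(1) the key step is the coprimality $\gcd(B_m,C_{qm})=1$, which you deduce from $\gcd(B_{qm},C_{qm})=1$ via $B_m\mid B_{qm}$; this is the standard route to strong divisibility and is fine. In part~(2) your bookkeeping is right: expanding $(3\pm2\sqrt2)^p$ and collecting odd-index terms gives $B_p=\sum_{k\ \text{odd}}\binom{p}{k}3^{p-k}8^{(k-1)/2}$, and modulo $p$ only $k=p$ survives (the $k=1$ term is itself a multiple of $p$), leaving $8^{(p-1)/2}\equiv\bigl(\tfrac{8}{p}\bigr)=\bigl(\tfrac{2}{p}\bigr)$ by Euler's criterion. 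Your identification of this with the symbol $\bigl(\tfrac{p}{8}\bigr)$ via the second supplement is the correct way to read the paper's notation, which is nonstandard since $8$ is not prime.
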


\begin{Prop}  
For $n\ge 0$, 
$$
\sum_{k=0}^n\binom{n}{k}(-1)^{n-k}3^k B_k=
\begin{cases} 
2^{\frac{3 n}{2}}B_n&\text{if $n$ is even};\\
2^{\frac{3(n-1)}{2}}C_n&\text{if $n$ is odd}  
\end{cases} 
$$ 
and 
$$
\sum_{k=0}^n\binom{n}{k}(-1)^{n-k}3^k C_k=
\begin{cases} 
2^{\frac{3 n}{2}}C_n&\text{if $n$ is even};\\
2^{\frac{3(n+1)}{2}}B_n&\text{if $n$ is odd}\,. 
\end{cases} 
$$ 
\label{prp1}  
\end{Prop}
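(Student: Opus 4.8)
The plan is to prove both identities at once by substituting the Binet representations $B_n=(\alpha^n-\beta^n)/(4\sqrt2)$ and $C_n=(\alpha^n+\beta^n)/2$, with $\alpha=3+2\sqrt2$ and $\beta=3-2\sqrt2$, and then applying the binomial theorem. Since $\sum_{k=0}^n\binom nk(-1)^{n-k}(3x)^k=(3x-1)^n$, the two left-hand sides collapse to
$$
\sum_{k=0}^n\binom nk(-1)^{n-k}3^k B_k=\frac{1}{4\sqrt2}\Bigl((3\alpha-1)^n-(3\beta-1)^n\Bigr),\qquad
\sum_{k=0}^n\binom nk(-1)^{n-k}3^k C_k=\frac12\Bigl((3\alpha-1)^n+(3\beta-1)^n\Bigr).
$$
So everything comes down to understanding the numbers $3\alpha-1$ and $3\beta-1$.

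The key observation is the linearization $3\alpha-1=2\sqrt2\,\alpha$ and $3\beta-1=-2\sqrt2\,\beta$. These may be checked by direct computation ($3\alpha-1=8+6\sqrt2=2\sqrt2(3+2\sqrt2)$), or derived conceptually: from $\alpha-\beta=4\sqrt2$, $\alpha\beta=1$, and $\alpha^2=6\alpha-1$ one gets $2\sqrt2\,\alpha=\tfrac12(\alpha-\beta)\alpha=\tfrac12(\alpha^2-1)=3\alpha-1$, and the companion identity follows by applying the conjugation $\sqrt2\mapsto-\sqrt2$. Raising to the $n$-th power then gives $(3\alpha-1)^n=(2\sqrt2)^n\alpha^n$ and $(3\beta-1)^n=(-1)^n(2\sqrt2)^n\beta^n$, with $(2\sqrt2)^n=2^{3n/2}$.

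Substituting back, the first sum equals $\dfrac{2^{3n/2}}{4\sqrt2}\bigl(\alpha^n-(-1)^n\beta^n\bigr)$ and the second equals $\dfrac{2^{3n/2}}{2}\bigl(\alpha^n+(-1)^n\beta^n\bigr)$. Now split on the parity of $n$. If $n$ is even, $(-1)^n=1$, and the Binet forms give directly $2^{3n/2}B_n$ for the first sum and $2^{3n/2}C_n$ for the second. If $n$ is odd, $(-1)^n=-1$, so the first sum becomes $\dfrac{2^{3n/2}}{4\sqrt2}(\alpha^n+\beta^n)=\dfrac{2^{3n/2}}{4\sqrt2}\cdot 2C_n=2^{3(n-1)/2}C_n$, using $4\sqrt2=2^{5/2}$, and the second becomes $\dfrac{2^{3n/2}}{2}(\alpha^n-\beta^n)=\dfrac{2^{3n/2}}{2}\cdot 4\sqrt2\,B_n=2^{3(n+1)/2}B_n$. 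This is exactly the claimed case split.

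I do not expect any genuine obstacle here: once the linearization $3\alpha-1=2\sqrt2\,\alpha$ is spotted, the binomial sum telescopes into a pure power and the rest is bookkeeping with powers of $2$ and the two parities. The only point requiring a moment's thought is recognizing that the factor $(-1)^{n-k}3^k$ is precisely what is needed so that the binomial theorem produces $(3\alpha-1)^n$, and that $3\alpha-1$ happens to be a scalar multiple of $\alpha$.
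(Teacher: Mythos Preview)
Your proof is correct and follows essentially the same approach as the paper: substitute the Binet forms, apply the binomial theorem to obtain $(3\alpha-1)^n$ and $(3\beta-1)^n$, use the key linearization $3\alpha-1=2\sqrt2\,\alpha$, $3\beta-1=-2\sqrt2\,\beta$, and split on parity. The paper treats only the first identity explicitly and leaves the second as ``similar,'' whereas you handle both simultaneously and give an additional conceptual derivation of the linearization via $\alpha^2=6\alpha-1$; these are cosmetic differences only.
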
  
\begin{proof}  
We prove the first part.  The second part is proven similarly and omitted.  
Notice that $3\alpha-1=2\sqrt{2}\alpha$ and $3\beta-1=-2\sqrt{2}$.  
When $n$ is even, the left-hand side is equal to 
\begin{align*}  
\frac{1}{4\sqrt{2}}\sum_{k=0}^n\binom{n}{k}(-1)^{n-k}3^k(\alpha^k-\beta^k)
&=\frac{(3\alpha-1)^n-(3\beta-1)^n}{4\sqrt{2}}\\
&=\frac{(2\sqrt{2}\alpha)^n-(2\sqrt{2}\beta)^n}{4\sqrt{2}}\\
&=(2\sqrt{2})^n B_n\,. 
\end{align*} 
When $n$ is odd, the left-hand side is equal to 
\begin{align*}  
\frac{1}{4\sqrt{2}}\sum_{k=0}^n\binom{n}{k}(-1)^{n-k}3^k(\alpha^k-\beta^k)
&=\frac{(3\alpha-1)^n-(3\beta-1)^n}{4\sqrt{2}}\\
&=\frac{(2\sqrt{2}\alpha)^n+(2\sqrt{2}\beta)^n}{4\sqrt{2}}\\
&=\frac{(2\sqrt{2})^n C_n}{2\sqrt{2}}\,. 
\end{align*} 
\end{proof}

\begin{Prop}  
For $n\ge 0$, 
\begin{align}  
\sum_{k=0}^{2 n}\binom{2 n}{k}B_k&=8^n B_n\,, 
\label{prp2b1}\\ 
\sum_{k=0}^{2 n}\binom{2 n}{k}C_k&=8^n C_n\,, 
\label{prp2c1}\\
\sum_{k=0}^{2 n}\binom{2 n}{k}(-1)^k B_k&=4^n B_n\,, 
\label{prp2b2}\\
\sum_{k=0}^{2 n}\binom{2 n}{k}(-1)^k C_k&=4^n C_n\,.  
\label{prp2c2}    
\end{align}  
\label{prp2}
\end{Prop}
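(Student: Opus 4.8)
The plan is to argue directly from the Binet-type formulas $B_n=(\alpha^n-\beta^n)/(4\sqrt2)$ and $C_n=(\alpha^n+\beta^n)/2$, in the same spirit as the proof of Proposition~\ref{prp1}. Substituting these into the left-hand sides of \eqref{prp2b1}--\eqref{prp2c2} and using the binomial theorem collapses each sum: $\sum_{k=0}^{2n}\binom{2n}{k}\alpha^k=(1+\alpha)^{2n}$, $\sum_{k=0}^{2n}\binom{2n}{k}(-1)^k\alpha^k=(1-\alpha)^{2n}$, and likewise with $\beta$ in place of $\alpha$. So the whole proposition reduces to evaluating the four quantities $(1\pm\alpha)^{2n}$ and $(1\pm\beta)^{2n}$.

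Here the key observation, using only $\alpha\beta=1$ and $\alpha+\beta=6$, is that $1+\alpha=\alpha(1+\beta)$ and $(1+\alpha)(1+\beta)=1+(\alpha+\beta)+\alpha\beta=8$, whence $(1+\alpha)^2=\alpha\,(1+\beta)(1+\alpha)=8\alpha$, and symmetrically $(1+\beta)^2=8\beta$. In the same way $1-\alpha=-\alpha(1-\beta)$ and $(1-\alpha)(1-\beta)=1-(\alpha+\beta)+\alpha\beta=-4$, so $(1-\alpha)^2=4\alpha$ and $(1-\beta)^2=4\beta$. Raising these to the $n$-th power gives $(1+\alpha)^{2n}=8^n\alpha^n$, $(1+\beta)^{2n}=8^n\beta^n$, $(1-\alpha)^{2n}=4^n\alpha^n$, and $(1-\beta)^{2n}=4^n\beta^n$.

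Plugging back in finishes everything immediately. For \eqref{prp2b1}, $\frac{1}{4\sqrt2}\bigl((1+\alpha)^{2n}-(1+\beta)^{2n}\bigr)=\frac{8^n}{4\sqrt2}(\alpha^n-\beta^n)=8^nB_n$; for \eqref{prp2c1}, $\frac12\bigl((1+\alpha)^{2n}+(1+\beta)^{2n}\bigr)=8^nC_n$; and \eqref{prp2b2}, \eqref{prp2c2} are identical except that the factor $8$ is replaced by $4$, because the alternating sign turns $(1+\alpha)^{2n}$, $(1+\beta)^{2n}$ into $(1-\alpha)^{2n}$, $(1-\beta)^{2n}$. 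I do not expect any real obstacle: the proposition rests entirely on the elementary squaring identities $(1+\alpha)^2=8\alpha$ and $(1-\alpha)^2=4\alpha$, and once these are in hand the binomial theorem and the Binet formulas do the rest. (A double induction on $n$ via the recurrence $B_n=6B_{n-1}-B_{n-2}$ would also work, but is more cumbersome and less transparent.)
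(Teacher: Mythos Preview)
Your argument is correct and is exactly the approach the paper takes: the paper's proof simply records the key identities $(\alpha+1)^2=8\alpha$, $(\beta+1)^2=8\beta$, $(\alpha-1)^2=4\alpha$, $(\beta-1)^2=4\beta$ and says the four formulas follow, which is precisely the computation you spell out in full.
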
 
\begin{proof}  
By using $(\alpha+1)^2=8\alpha$ and $(\beta+1)^2=8\beta$, we get (\ref{prp2b1}) and (\ref{prp2c1}), respectively.   
By using $(\alpha-1)^2=4\alpha$ and $(\beta-1)^2=4\beta$, we get (\ref{prp2b2}) and (\ref{prp2c2}), respectively.  
\end{proof}

\section{The sums of products of two balancing numbers}  

In this section, we consider the sums of products of two balancing numbers.  The sums of products of various numbers with or without binomial coefficients have been studied (e.g., \cite{AD1,Dilcher,Kamano1,Kamano2,Komatsu3,Komatsu4,Komatsu2014,Komatsu2014b,Komatsu2014,Komatsu2015,Komatsu6,KS,Zhao}).  One of the famous results are about Bernoulli numbers $\mathfrak B_n$, defined by the generating function 
$$
\frac{t}{e^t-1}=\sum_{n=0}^\infty\mathfrak B_n\frac{t^n}{n!}\,. 
$$  
The following identity (with binomial coefficients) on sums of products of two Bernoulli numbers is known as Euler's formula: 
$$
\sum_{i=0}^n\binom{n}{i}\mathfrak B_i\mathfrak B_{n-i}=-\mathfrak B_{n-1}-(n-1)\mathfrak B_n\quad(n\ge 0)\,. 
$$ 

The new results (without binomial coefficients) are based upon the generating function of balancing numbers of the types $B_{k n+r}$ and those of Lucas-balancing numbers $C_{k n+r}$, which are obtained in (\ref{gf:b}) and (\ref{gf:c}), respectively.  Notice that the corresponding generating functions of Fibonacci numbers and Lucas numbers are given by 
\begin{equation}  
\frac{F_r+(-1)^r F_{k-r}t}{1-L_k t+(-1)^k 
t^2}=\sum_{n=0}^\infty F_{k n+r}t^n
\label{fib:gen}
\end{equation}  
and 
\begin{equation} 
\frac{L_r+(-1)^r L_{k-r}t}{1-L_k t+(-1)^k t^2}=\sum_{n=0}^\infty L_{k n+r}t^n\,,
\label{luca:gen}
\end{equation}  
respectively (\cite[p.230]{Koshy}) .

\begin{theorem}  
Let $k$ and $r$ be fixed integers with $k>r\ge 0$. Then we have 
\begin{multline*} 
\sum_{m=0}^n B_{k m+r}B_{k(n-m)+r}=B_{k-r}\biggl(-(n+1)B_{k(n+1)+r}\\
\quad\left.+\sum_{j=0}^n\frac{B_k B_{k-r}^j}{2}\left(\frac{(-1)^j}{(B_k+B_r)^{j+1}}+\frac{1}{(B_k-B_r)^{j+1}}\right)(n-j+1)B_{k(n-j+1)+r}\right) 
\end{multline*} 
and 
\begin{multline*} 
\sum_{m=0}^n C_{k m+r}C_{k(n-m)+r}=C_{k-r}\biggl((n+1)C_{k(n+1)+r}\\
-\sum_{j=0}^n\frac{2\sqrt{2}B_k(C_{k-r}\sqrt{-1})^j}{2}\biggl(\frac{1}{(2\sqrt{2}B_k+C_r\sqrt{-1})^{j+1}}\\
\left.+\frac{(-1)^j}{(2\sqrt{2}B_k-C_r\sqrt{-1})^{j+1}}\biggr)(n-j+1)C_{k(n-j+1)+r}\right)\,. 
\end{multline*} 
\label{th:convo}
\end{theorem}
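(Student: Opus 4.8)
The plan is to derive the two convolution identities from the generating functions of $B_{kn+r}$ and $C_{kn+r}$, together with a partial fraction expansion of the denominator. First I would establish the generating function
\[
\sum_{n=0}^\infty B_{kn+r}\,t^n=\frac{B_r+B_{k-r}t}{1-2C_k t+t^2}\,,
\]
which follows from $B_{kn+r}=\frac{\alpha^{kn+r}-\beta^{kn+r}}{4\sqrt2}$ by summing the two geometric series with ratios $\alpha^k t$ and $\beta^k t$ and using $\alpha^k+\beta^k=2C_k$, $\alpha^k\beta^k=1$; the numerator $B_r+B_{k-r}t$ comes from simplifying $\frac{(B_r-B_{k-r}\cdot 0)\dots}{}$ via $\alpha^k\beta^r+\alpha^r\beta^k=2C_{k-r}$ type relations. (This is presumably the content of the referenced $(\ref{gf:b})$.) Squaring this generating function, the coefficient of $t^n$ on the left is exactly $\sum_{m=0}^n B_{km+r}B_{k(n-m)+r}$. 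So the identity reduces to extracting the coefficient of $t^n$ in
\[
\frac{(B_r+B_{k-r}t)^2}{(1-2C_k t+t^2)^2}\,.
\]

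The key algebraic step is to rewrite this. Using the relation $C_k^2-1=8B_k^2$ (from $(\ref{eq:c-b})$), one factors $1-2C_k t+t^2=(1-\alpha^k t)(1-\beta^k t)$, but the cleaner route suggested by the stated answer is: note $\frac{d}{dt}\frac{1}{1-2C_k t+t^2}=\frac{2C_k-2t}{(1-2C_k t+t^2)^2}$, so $\frac{1}{(1-2C_k t+t^2)^2}$ is, up to the factor $\frac{B_k}{\,\cdot\,}$ and a shift, expressible through a derivative of $\sum B_{kn+r}t^n$. More precisely, I would write the squared generating function as a combination of $\frac{1}{1-2C_k t+t^2}$ and $\frac{t\,\frac{d}{dt}}{1-2C_k t+t^2}$-type terms, and then perform a partial fraction decomposition of $\frac{1}{(1-\alpha^k t)(1-\beta^k t)}$ in the numerator variable. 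The geometric-type factors $\frac{1}{(B_k\pm B_r)^{j+1}}$ and the sign $(-1)^j$ arise from expanding $\frac{1}{B_k-B_r t'}$-style expressions after the substitution $t'=\dots$; tracking the constants $B_{k-r}$, $B_k$, $B_r$ is exactly the bookkeeping that produces the displayed closed form. The factor $(n-j+1)B_{k(n-j+1)+r}$ is the hallmark of having differentiated a generating function once (it is $[t^{n-j}]$ of $\frac{d}{dt}$ applied to $\frac{t(B_r+B_{k-r}t)}{1-2C_k t+t^2}$ or similar), which confirms the derivative-of-generating-function mechanism.

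For the Lucas-balancing case I would run the identical argument with $C_n=\frac{\alpha^n+\beta^n}{2}$ in place of $B_n$. The only new feature is that $C_r^2-C_k^2$ has the opposite sign behaviour from $B_r^2-B_k^2$ (indeed $\alpha^r\beta^r=1$ forces the relevant discriminant to involve $C_k^2-1=8B_k^2>0$ but the "$C_r$" analogue of $B_r$ enters with a sign that makes the quadratic factor split over $\mathbb{C}$ rather than $\mathbb{R}$), which is precisely why $\sqrt{-1}$ appears: the partial-fraction poles are at $2\sqrt2 B_k\pm C_r\sqrt{-1}$ rather than $B_k\pm B_r$. Concretely I expect to use $C_{k-r}C_{k+r}=C_k^2+C_r^2-1=C_k^2+8B_k^2+C_r^2-1-8B_k^2$... more simply, the generating function $\sum C_{kn+r}t^n=\frac{C_r-C_{k-r}t}{1-2C_k t+t^2}$ (note the sign change in the numerator, coming from $C_0=1\ne 0$), and then the same differentiation-and-partial-fractions procedure, carrying the imaginary units through. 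One should check at the end that the imaginary parts cancel so the right-hand side is real, which they do because the two poles are complex conjugates and the two summand terms inside the big parentheses are conjugate to each other.

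The main obstacle I anticipate is purely computational bookkeeping rather than conceptual: getting the partial-fraction constants exactly right so that the messy coefficients $\frac{B_k B_{k-r}^j}{2}\bigl(\frac{(-1)^j}{(B_k+B_r)^{j+1}}+\frac{1}{(B_k-B_r)^{j+1}}\bigr)$ emerge with the stated signs and exponents. The cleanest way to avoid error is to first prove the "building block" lemma that
\[
\frac{1}{(1-2C_k t+t^2)^2}=\sum_{n=0}^\infty\Big(\text{explicit coefficient}\Big)t^n
\]
by differentiating $\sum B_{kn+r}t^n$ (for a convenient $r$, e.g. via $\frac{1}{1-2C_k t+t^2}=\frac{1}{B_{k-r}}\sum(B_{k(n+1)+r}-\dots)$ adjustments), and only then multiply by $(B_r+B_{k-r}t)^2$ and re-index the sum $j\mapsto$ the summation index in the theorem. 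Everything else — squaring, differentiating, geometric expansion, equating coefficients of $t^n$ — is routine, and the identity then follows by comparing the two power series term by term.
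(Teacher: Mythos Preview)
Your overall strategy---square the generating function $b(t)=\frac{B_r+B_{k-r}t}{1-2C_kt+t^2}$, relate the result to $b'(t)$, expand, and compare coefficients---is exactly the paper's approach, and your diagnosis that the $(n-j+1)B_{k(n-j+1)+r}$ factors come from differentiating once is correct. Where your sketch stays vague (``partial fractions in the numerator variable'', ``bookkeeping'') the paper is more direct: it computes $b'(t)$ explicitly and observes that
\[
b(t)^2=\frac{(B_r+B_{k-r}t)^2}{B_{k+r}-2B_rt-B_{k-r}t^2}\,b'(t),
\]
the point being that the numerator of $b'(t)$ is the quadratic $B_{k+r}-2B_rt-B_{k-r}t^2$ (using $B_{k+r}=2C_kB_r+B_{k-r}$). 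Then, using $B_r^2+B_{k+r}B_{k-r}=B_k^2$, the rational prefactor simplifies to $-B_{k-r}+B_{k-r}\sum_{l\ge 0}\bigl(\frac{B_r}{B_k}+\frac{B_{k-r}}{B_k}t\bigr)^{2l}$, and the inner sum over $l$ (after a binomial expansion in $t$) is evaluated via the negative-binomial identities $\frac{(1+x)^{-s}\pm(1-x)^{-s}}{2}=\sum_m\binom{s-1+2m+\epsilon}{s-1}x^{2m+\epsilon}$, which is exactly where the closed forms $\frac{(-1)^j}{(B_k+B_r)^{j+1}}+\frac{1}{(B_k-B_r)^{j+1}}$ come from. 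So no partial fractions in the usual sense are needed; the $C$-case runs identically with the prefactor becoming $C_{k-r}-C_{k-r}\sum_l(-1)^l\bigl(\frac{C_r-C_{k-r}t}{2\sqrt2 B_k}\bigr)^{2l}$, and the alternating sign in $l$ is what forces the $\sqrt{-1}$ in the negative-binomial step.
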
 

\begin{proof}
From the first identity of Proposition \ref{prp2009}, we get $B_{m k+r}-2 C_k B_{(m-1)k+r}=-B_{(m-2)k+r}$ and $B_{k+r}-2 C_k B_r=B_{k-r}$. Hence, the generating function of $B_{k n+r}$ is given by 
\begin{equation} 
b(t):=
\frac{B_r+B_{k-r}t}{1-2 C_k t+t^2}=\sum_{n=0}^\infty B_{k n+r}t^n\,. 
\label{gf:b}
\end{equation}  
Thus, by using (\ref{relation1}), we have 
\begin{align*}  
b(t)^2&=\frac{(B_r+B_{k-r}t)^2}{B_{k+r}-2 B_r t-B_{k-r}t^2}b'(t)\\
&=\left(-B_{k-r}+\frac{B_r^2+B_{k+r}B_{k-r}}{B_{k+r}-2 B_r t-B_{k-r}t^2}\right)b'(t)\\
&=\left(-B_{k-r}+\frac{1}{B_{k-r}}\frac{B_k^2}{\left(\frac{B_k}{B_{k-r}}\right)^2-\left(\frac{B_r}{B_{k-r}}+t\right)^2}\right)b'(t)\\
&=B_{k-r}\left(-1+\sum_{l=0}^\infty\left(\frac{B_r}{B_k}+\frac{B_{k-r}}{B_k}t\right)^{2 l}\right)b'(t)\\
&=B_{k-r}\left(-1+\sum_{l=0}^\infty\sum_{j=0}^{2 l}\binom{2 l}{j}\left(\frac{B_r}{B_k}\right)^{2 l-j}\left(\frac{B_{k-r}}{B_k}\right)^j t^j\right)b'(t)\,.
\end{align*}  
Since 
$$
b'(t)=\sum_{n=0}^\infty(n+1)B_{k(n+1)+r}t^n\,, 
$$ 
we obtain 
\begin{align}  
b(t)^2&=B_{k-r}\left(-\sum_{n=0}^\infty(n+1)B_{k(n+1)+r}t^n\right.\notag\\
&\quad\left.+\sum_{l=0}^\infty\sum_{j=0}^{2 l}\binom{2 l}{j}\left(\frac{B_r}{B_k}\right)^{2 l-j}\left(\frac{B_{k-r}}{B_k}\right)^j\sum_{n=j}^\infty(n-j+1)B_{k(n-j+1)+r}t^n\right)
\label{1234}\\
&=B_{k-r}\sum_{n=0}^\infty\biggl(-(n+1)B_{k(n+1)+r}\notag\\
&\quad\left.+\sum_{j=0}^{n}\sum_{l=\fl{\frac{j+1}{2}}}^\infty\binom{2 l}{j}\left(\frac{B_r}{B_k}\right)^{2 l-j}\left(\frac{B_{k-r}}{B_k}\right)^j(n-j+1)B_{k(n-j+1)+r}\right)t^n\,. \notag 
\end{align}  
Since for a positive integer $s$ and a real number $x$ with $|x|<1$ 
$$
\frac{(1+x)^{-s}+(1-x)^{-s}}{2}=\binom{s-1}{s-1}+\binom{s+1}{s-1}x^2+\binom{s+3}{s-1}x^4+\cdots 
$$ 
and 
$$
\frac{-(1+x)^{-s}+(1-x)^{-s}}{2}=\binom{s}{s-1}x+\binom{s+2}{s-1}x^3+\binom{s+4}{s-1}x^5+\cdots\,, 
$$ 
we get 
\begin{align*} 
&\sum_{l=\fl{\frac{j+1}{2}}}^\infty\binom{2 l}{j}\left(\frac{B_r}{B_k}\right)^{2 l-j}\left(\frac{B_{k-r}}{B_k}\right)^j\\ 
&=\frac{1}{2}\left((-1)^j\left(1+\frac{B_r}{B_k}\right)^{-j-1}+\left(1-\frac{B_r}{B_k}\right)^{-j-1}\right)\left(\frac{B_{k-r}}{B_k}\right)^j\\
&=\frac{B_k B_{k-r}^j}{2}\left(\frac{(-1)^j}{(B_k+B_r)^{j+1}}+\frac{1}{(B_k-B_r)^{j+1}}\right)\,. 
\end{align*} 
Since 
$$
b(t)^2=\sum_{n=0}^\infty\sum_{m=0}^n B_{k m+r}B_{k(n-m)+r}t^n\,, 
$$ 
by comparing the coefficients on both sides, we get the first identity.  

From the second identity of Proposition \ref{prp2009}, we get $C_{m k+r}-2 C_k C_{(m-1)k+r}=-C_{(m-2)k+r}$ and $C_{k+r}-2 C_k C_r=-C_{k-r}$. Hence, the generating function of $C_{k n+r}$ is given by 
\begin{equation} 
c(t):=\frac{C_r-C_{k-r}t}{1-2 C_k t+t^2}=\sum_{n=0}^\infty C_{k n+r}t^n\,. 
\label{gf:c}
\end{equation}  
Similarly, by using the relations (\ref{eq:c-b}) and (\ref{relation1}), we have 
\begin{align*}  
c(t)^2&=\frac{(C_r-C_{k-r}t)^2}{C_{k+r}-2 C_r t+C_{k-r}t^2}c'(t)\\
&=\left(C_{k-r}+\frac{C_r^2-C_{k+r}C_{k-r}}{C_{k+r}-2 C_r t+C_{k-r}t^2}\right)c'(t)\\
&=\left(C_{k-r}-\frac{1}{C_{k-r}}\frac{C_k^2-1}{\frac{C_k^2-1}{C_{k-r}^2}+\left(\frac{C_r}{C_{k-r}}-t\right)^2}\right)c'(t)\\
&=C_{k-r}\left(1-\sum_{l=0}^\infty(-1)^l\left(\frac{C_r}{2\sqrt{2}B_k}-\frac{C_{k-r}}{2\sqrt{2}B_k}t\right)^{2 l}\right)c'(t)\\
&=C_{k-r}\Biggl(1\\
&\quad-\sum_{l=0}^\infty(-1)^l\sum_{j=0}^{2 l}(-1)^j\binom{2 l}{j}\left(\frac{C_r}{2\sqrt{2}B_k}\right)^{2 l-j}\left(\frac{C_{k-r}}{2\sqrt{2}B_k}\right)^j t^j\Biggr)c'(t)\,.
\end{align*}  
Since 
$$
c'(t)=\sum_{n=0}^\infty(n+1)C_{k(n+1)+r}t^n\,, 
$$ 
we obtain 
\begin{align*}  
c(t)^2
&=C_{k-r}\biggl(\sum_{n=0}^\infty(n+1)C_{k(n+1)+r}t^n\\
&\quad-\sum_{l=0}^\infty(-1)^l\sum_{j=0}^{2 l}(-1)^j\binom{2 l}{j}\left(\frac{C_r}{2\sqrt{2}B_k}\right)^{2 l-j}\left(\frac{C_{k-r}}{2\sqrt{2}B_k}\right)^j\\
&\qquad\qquad\times\sum_{n=j}^\infty(n-j+1)C_{k(n-j+1)+r}t^n\biggr)\\
&=C_{k-r}\sum_{n=0}^\infty\biggl((n+1)C_{k(n+1)+r}\\
&\quad-\sum_{j=0}^{n+1}(-1)^j\sum_{l=\fl{\frac{j+1}{2}}}^\infty(-1)^l\binom{2 l}{j}\left(\frac{C_r}{2\sqrt{2}B_k}\right)^{2 l-j}\left(\frac{C_{k-r}}{2\sqrt{2}B_k}\right)^j\\
&\qquad\qquad\times(n-j+1)C_{k(n-j+1)+r}\biggr)t^n\,. 
\end{align*} 
Since for a positive integer $s$ and a real number $x$ with $|x|<1$ 
$$
\frac{(1+x\sqrt{-1})^{-s}+(1-x\sqrt{-1})^{-s}}{2}=\binom{s-1}{s-1}-\binom{s+1}{s-1}x^2+\binom{s+3}{s-1}x^4-\cdots 
$$ 
and 
\begin{multline*} 
\frac{-(1+x\sqrt{-1})^{-s}+(1-x\sqrt{-1})^{-s}}{2}\\
=\sqrt{-1}\left(\binom{s}{s-1}x-\binom{s+2}{s-1}x^3+\binom{s+4}{s-1}x^5-\cdots\right)\,, 
\end{multline*}  
we get 
\begin{align*} 
&\sum_{l=\fl{\frac{j+1}{2}}}^\infty(-1)^l\binom{2 l}{j}\left(\frac{C_r}{2\sqrt{2}B_k}\right)^{2 l-j}\left(\frac{C_{k-r}}{2\sqrt{2}B_k}\right)^j\\ 
&=\frac{1}{2}\left((-1)^j\left(1+\frac{C_r}{2\sqrt{2}B_k}\right)^{-j-1}+\left(1-\frac{C_r}{2\sqrt{2}B_k}\right)^{-j-1}\right)\left(\frac{C_{k-r}}{2\sqrt{2}B_k}\right)^j\\
&=\frac{2\sqrt{2}B_k(C_{k-r}\sqrt{-1})^j}{2}\\
&\qquad\times\left(\frac{(-1)^j}{(2\sqrt{2}B_k+C_r\sqrt{-1})^{j+1}}+\frac{1}{(2\sqrt{2}B_k-C_r\sqrt{-1})^{j+1}}\right)\,. 
\end{align*} 
Since 
$$
c(t)^2=\sum_{n=0}^\infty\sum_{m=0}^n C_{k m+r}C_{k(n-m)+r}t^n\,, 
$$ 
by comparing the coefficients on both sides, we get the second identity.  
\end{proof}

If $r=0$ in the first identity of Theorem \ref{th:convo}, by putting $r=0$ in (\ref{1234}), the formula can be simplified because $B_0=0$. 

\begin{Cor} 
For a positive integer $k$, we have 
$$  
\sum_{m=0}^n B_{k m}B_{k(n-m)}=B_{k}\sum_{l=1}^{\fl{\frac{n+1}{2}}}(n-2 l+1)B_{k(n-2 l+1)}\,. 
$$ 
\end{Cor}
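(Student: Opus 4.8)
The plan is to revisit the proof of the first identity of Theorem~\ref{th:convo} and substitute $r=0$ \emph{before} the symmetrization step that introduced the binomial series; the expansion~(\ref{1234}) for $b(t)^2$ is already in a form where this specialization is immediate, and the fact $B_0=0$ annihilates almost all of the double sum.

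First I would record the effect of $r=0$ on~(\ref{1234}). Here $B_r=B_0=0$ and $B_{k-r}=B_k$, so the generating function becomes $b(t)=B_k t/(1-2C_k t+t^2)=\sum_{n\ge 0}B_{kn}t^n$. In the inner sum $\sum_{j=0}^{2l}\binom{2l}{j}(B_r/B_k)^{2l-j}(B_{k-r}/B_k)^j t^j$, the factor $(B_r/B_k)^{2l-j}$ vanishes unless $2l-j=0$; hence only $j=2l$ survives, with $\binom{2l}{2l}=1$ and $(B_k/B_k)^{2l}=1$. Consequently~(\ref{1234}) collapses to
\[
b(t)^2=B_k\Biggl(-\sum_{n=0}^\infty(n+1)B_{k(n+1)}t^n+\sum_{l=0}^\infty\;\sum_{n=2l}^\infty(n-2l+1)B_{k(n-2l+1)}t^n\Biggr).
\]

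Next I would observe that the $l=0$ summand of the double sum is precisely $\sum_{n\ge 0}(n+1)B_{k(n+1)}t^n$, cancelling the leading negative series. Interchanging the two summations in the remainder — for fixed $n$, the condition $2l\le n$ with $l\ge 1$ gives $1\le l\le\fl{n/2}$ — the coefficient of $t^n$ equals $B_k\sum_{l=1}^{\fl{n/2}}(n-2l+1)B_{k(n-2l+1)}$. Since $b(t)^2=\sum_{n\ge 0}\bigl(\sum_{m=0}^n B_{km}B_{k(n-m)}\bigr)t^n$, comparing coefficients of $t^n$ finishes the argument. The stated upper limit $\fl{(n+1)/2}$ differs from $\fl{n/2}$ only when $n$ is odd, and then the extra index $l=(n+1)/2$ contributes $(n-2l+1)B_{k(n-2l+1)}=0\cdot B_0=0$, so the two forms agree.

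Since everything here is formal power series bookkeeping, I do not expect a genuine obstacle; the only place requiring mild care is the reindexing when swapping $\sum_l$ and $\sum_n$ and the reconciliation of the two floor functions in the final summation bound. A quick numerical check (for instance $k=1$ with $n=2,3,4$, where both sides give $1$, $12$, $106$) confirms the identity.
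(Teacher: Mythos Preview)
Your proposal is correct and follows exactly the approach indicated by the paper, which merely states that one should put $r=0$ in~(\ref{1234}) and use $B_0=0$; you have simply filled in the details of that specialization, including the cancellation of the $l=0$ term and the reconciliation of the upper summation limits $\lfloor n/2\rfloor$ versus $\lfloor (n+1)/2\rfloor$.
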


This method is also applicable to Fibonacci and Lucas numbers.   Similarly to Theorem \ref{th:convo}, by (\ref{fib:gen}) with 
$$
(-1)^r F_{k-r}+F_r L_k=F_{k+r}\quad\hbox{and}\quad 
(-1)^{k-r}F_{k+r}F_{k-r}+F_r^2=(-1)^{k-r}F_k^2\,, 
$$ 
we have the relation for Fibonacci numbers.  

\begin{theorem}  
Let $k$ and $r$ be fixed integers with $k>r\ge 0$. If $k-r$ is even,  then 
\begin{multline*} 
\sum_{m=0}^n F_{k m+r}F_{k(n-m)+r}=F_{k-r}\biggl(-(n+1)F_{k(n+1)+r}\\
\quad\left.+\sum_{j=0}^n(-1)^{r j}\frac{F_k F_{k-r}^j}{2}\left(\frac{(-1)^j}{(F_k+F_r)^{j+1}}+\frac{1}{(F_k-F_r)^{j+1}}\right)(n-j+1)F_{k(n-j+1)+r}\right)\,. 
\end{multline*} 
If $k-r$ is odd, then 
\begin{multline*} 
\sum_{m=0}^n F_{k m+r}F_{k(n-m)+r}=F_{k-r}\biggl((n+1)F_{k(n+1)+r}\\
-\sum_{j=0}^n(-1)^{r j}\frac{F_k(F_{k-r}\sqrt{-1})^j}{2}\biggl(\frac{1}{(F_k+F_r\sqrt{-1})^{j+1}}\\
\left.+\frac{(-1)^j}{(F_k-F_r\sqrt{-1})^{j+1}}\biggr)(n-j+1)F_{k(n-j+1)+r}\right)\,. 
\end{multline*} 
\label{th:fibconvo}
\end{theorem}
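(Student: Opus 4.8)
The plan is to imitate the proof of Theorem~\ref{th:convo} line for line, replacing the generating functions (\ref{gf:b}) and (\ref{gf:c}) by (\ref{fib:gen}). Write $f(t)=N(t)/D(t)$ with $N(t)=F_r+(-1)^r F_{k-r}t$ and $D(t)=1-L_k t+(-1)^k t^2$, so that $f(t)=\sum_{n\ge 0}F_{kn+r}t^n$. Since $f'(t)=(N'D-ND')/D^2$, we again have the key identity $f(t)^2=\frac{N(t)^2}{N'(t)D(t)-N(t)D'(t)}\,f'(t)$, and the whole proof amounts to analysing the rational function on the right.

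First I would expand $N'(t)D(t)-N(t)D'(t)$ directly. The linear term collapses once we invoke the identity $(-1)^r F_{k-r}+F_r L_k=F_{k+r}$ stated just before the theorem, and I expect to land on
$$N'(t)D(t)-N(t)D'(t)=F_{k+r}-2(-1)^k F_r t-(-1)^{k-r}F_{k-r}t^2\,.$$
Dividing the quadratic $N(t)^2=F_r^2+2(-1)^r F_r F_{k-r}t+F_{k-r}^2 t^2$ by this quadratic, the quotient is the constant $(-1)^{k-r+1}F_{k-r}$, the linear and quadratic parts of the remainder cancel on their own, and the other stated identity $(-1)^{k-r}F_{k+r}F_{k-r}+F_r^2=(-1)^{k-r}F_k^2$ pins down the remaining constant remainder as $(-1)^{k-r}F_k^2$. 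Hence
$$f(t)^2=\left((-1)^{k-r+1}F_{k-r}+\frac{(-1)^{k-r}F_k^2}{F_{k+r}-2(-1)^k F_r t-(-1)^{k-r}F_{k-r}t^2}\right)f'(t)\,.$$

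Now the argument bifurcates on the parity of $k-r$. Completing the square turns the denominator into $-(-1)^{k-r}F_{k-r}\bigl[\bigl(t+(-1)^r F_r/F_{k-r}\bigr)^2-(-1)^{k-r}F_k^2/F_{k-r}^2\bigr]$, a difference of squares when $k-r$ is even and a sum of squares when $k-r$ is odd. In the even case the fraction becomes $F_{k-r}\sum_{l\ge 0}\bigl((-1)^r F_r/F_k+(F_{k-r}/F_k)t\bigr)^{2l}$ and, since $(-1)^{k-r+1}=-1$, one gets $f(t)^2=F_{k-r}\bigl(-1+\sum_{l\ge 0}(\cdots)^{2l}\bigr)f'(t)$, exactly the shape of $b(t)^2$ in the balancing proof. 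In the odd case it becomes $-F_{k-r}\sum_{l\ge 0}(-1)^l\bigl((-1)^r F_r/F_k+(F_{k-r}/F_k)t\bigr)^{2l}$ and, since $(-1)^{k-r+1}=1$, one gets $f(t)^2=F_{k-r}\bigl(1-\sum_{l\ge 0}(-1)^l(\cdots)^{2l}\bigr)f'(t)$, the shape of $c(t)^2$. From there I would expand $\bigl((-1)^r F_r/F_k+(F_{k-r}/F_k)t\bigr)^{2l}$ by the binomial theorem — the sign contributes just the factor $(-1)^{rj}$ to the coefficient of $t^j$, because $\bigl((-1)^r\bigr)^{2l-j}=(-1)^{rj}$ — sum over $l$ with the two binomial-series identities already established in the proof of Theorem~\ref{th:convo} (the real pair in the even case, the $\sqrt{-1}$ pair in the odd case), and finally extract the coefficient of $t^n$ using $f'(t)=\sum_{n\ge 0}(n+1)F_{k(n+1)+r}t^n$ together with $f(t)^2=\sum_{n\ge 0}\bigl(\sum_{m=0}^n F_{km+r}F_{k(n-m)+r}\bigr)t^n$. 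This produces the two displayed formulas.

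The only genuine obstacle is the sign bookkeeping: one must track $(-1)^k$, $(-1)^r$ and $(-1)^{k-r}$ consistently through the computation of $N'D-ND'$, the polynomial division, and the completion of the square, since it is precisely whether $(-1)^{k-r}$ is $+1$ or $-1$ in front of the square that forces the even/odd dichotomy and selects which formula comes out. As in the balancing proof, the series rearrangements are legitimate because $|F_r/F_k|<1$ for $0\le r<k$ (the lone exceptional pair $(k,r)=(2,1)$ being handled separately or by analytic continuation). Apart from these signs, every step transfers verbatim from the proof of Theorem~\ref{th:convo}.
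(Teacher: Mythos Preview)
Your proposal is correct and follows precisely the route the paper indicates: it states only that the result is obtained ``similarly to Theorem~\ref{th:convo}, by (\ref{fib:gen}) with $(-1)^r F_{k-r}+F_r L_k=F_{k+r}$ and $(-1)^{k-r}F_{k+r}F_{k-r}+F_r^2=(-1)^{k-r}F_k^2$,'' and you have filled in exactly those details, including the correct sign bookkeeping that produces the even/odd dichotomy. Your aside about the borderline case $(k,r)=(2,1)$ is a valid caveat the paper does not mention.
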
 

By (\ref{luca:gen}) with 
$$
(-1)^{r-1} L_{k-r}+L_k L_r=L_{k+r}
$$
and 
$$ 
(-1)^{k-r}L_{k+r}L_{k-r}-L_r^2=(-1)^{k-r}L_k^2-4(-1)^r=(-1)^{k-r}5 F_k^2\,, 
$$ 
we have the relation for Lucas numbers.  

\begin{theorem}  
Let $k$ and $r$ be fixed integers with $k>r\ge 0$. If $k-r$ is even,  then 
\begin{multline*} 
\sum_{m=0}^n L_{k m+r}L_{k(n-m)+r}=L_{k-r}\biggl((n+1)L_{k(n+1)+r}\\
-\sum_{j=0}^n(-1)^{(r+1)j}\frac{\sqrt{5}F_k(L_{k-r}\sqrt{-1})^j}{2}\biggl(\frac{(-1)^j}{(\sqrt{5}F_k+L_r\sqrt{-1})^{j+1}}\\
\left.+\frac{1}{(\sqrt{5}F_k-L_r\sqrt{-1})^{j+1}}\biggr)(n-j+1)L_{k(n-j+1)+r}\right)\,. 
\end{multline*} 
If $k-r$ is odd, then 
\begin{multline*} 
\sum_{m=0}^n L_{k m+r}L_{k(n-m)+r}=L_{k-r}\biggl(-(n+1)L_{k(n+1)+r}\\
+\sum_{j=0}^n(-1)^{(r+1)j}\frac{\sqrt{5}F_kL_{k-r}^j}{2}\biggl(\frac{(-1)^j}{(\sqrt{5}F_k+L_r)^{j+1}}\\
\left.+\frac{1}{(\sqrt{5}F_k-L_r)^{j+1}}\biggr)(n-j+1)L_{k(n-j+1)+r}\right)\,. 
\end{multline*} 
\label{th:lucaconvo}
\end{theorem}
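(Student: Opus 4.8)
Following the pattern of the proofs of Theorem~\ref{th:convo} and Theorem~\ref{th:fibconvo}, the plan is to start from the generating function (\ref{luca:gen}). Write $\ell(t)=\sum_{n\ge 0}L_{kn+r}t^{n}=N(t)/D(t)$, where $D(t)=1-L_{k}t+(-1)^{k}t^{2}$ and $N(t)$ is the degree-one numerator of (\ref{luca:gen}). Since $\ell'=(N'D-ND')/D^{2}$, we get $\ell(t)^{2}=\dfrac{N(t)^{2}}{N'(t)D(t)-N(t)D'(t)}\,\ell'(t)$, and a direct expansion, in which the displayed identity $(-1)^{r-1}L_{k-r}+L_{k}L_{r}=L_{k+r}$ is used to collapse the constant term, shows that $N'D-ND'$ is the quadratic
\[
L_{k+r}-2(-1)^{k}L_{r}t+(-1)^{k+r}L_{k-r}t^{2}.
\]

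Next I would divide, $\dfrac{N^{2}}{N'D-ND'}=(-1)^{k+r}L_{k-r}+\dfrac{L_{r}^{2}-(-1)^{k+r}L_{k-r}L_{k+r}}{N'D-ND'}$, complete the square in the denominator, and invoke the second displayed identity $(-1)^{k-r}L_{k+r}L_{k-r}-L_{r}^{2}=(-1)^{k-r}5F_{k}^{2}$ (equivalently $(-1)^{k-r}L_{k}^{2}-4(-1)^{r}$) to rewrite the fraction as
\[
-\,\frac{(-1)^{k+r}L_{k-r}}{\,1+(-1)^{k+r}\bigl(\,(L_{k-r}t-(-1)^{r}L_{r})/(\sqrt5\,F_{k})\,\bigr)^{2}\,}.
\]
This is exactly the point at which the parity of $k-r$ (equivalently of $k+r$) forces the case split. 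If $k-r$ is even the denominator is $1+u^{2}$ with $u=(L_{k-r}t-(-1)^{r}L_{r})/(\sqrt5\,F_{k})$, so its reciprocal is expanded by the two series $\tfrac12\bigl((1+x\sqrt{-1})^{-s}\pm(1-x\sqrt{-1})^{-s}\bigr)$ recalled inside the proof of Theorem~\ref{th:convo}, and the $\sqrt{-1}$'s of the first stated formula appear; if $k-r$ is odd the denominator is $1-u^{2}$, so the real expansions $\tfrac12\bigl((1+x)^{-s}\pm(1-x)^{-s}\bigr)$ are used and the formula stays real. The sign $(-1)^{k+r}$ multiplying $L_{k-r}$ is also what makes the leading term $(n+1)L_{k(n+1)+r}$ enter with a plus sign when $k-r$ is even and with a minus sign when $k-r$ is odd.

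After inserting $\ell'(t)=\sum_{n\ge 0}(n+1)L_{k(n+1)+r}t^{n}$, the remaining work is mechanical: expand $(L_{k-r}t-(-1)^{r}L_{r})^{2l}$ by the binomial theorem — the factor $(-(-1)^{r})^{2l-j}=(-1)^{(r+1)j}$ attached to $t^{j}$ is precisely the prefactor $(-1)^{(r+1)j}$ of the statement, just as $(-1)^{rj}$ arises in Theorem~\ref{th:fibconvo} from the ``$+(-1)^{r}$'' in (\ref{fib:gen}) — interchange the sums over $j$ and $l$, and evaluate $\sum_{l\ge\fl{\frac{j+1}{2}}}(\pm1)^{l}\binom{2l}{j}x^{2l-j}$ in closed form by the same two series as above. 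Comparing the coefficient of $t^{n}$ on both sides of $\ell(t)^{2}=\sum_{n\ge 0}\bigl(\sum_{m=0}^{n}L_{km+r}L_{k(n-m)+r}\bigr)t^{n}$ then yields the two claimed identities; the term $j=n+1$ may be discarded since it carries the factor $n-j+1=0$.

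The computations are all routine; the one genuine difficulty is the sign bookkeeping. One has to carry the three independent signs $(-1)^{r}$, $(-1)^{k}$, $(-1)^{k-r}$, together with the ambient $\sqrt{-1}$'s, through the completion of the square, the choice of real versus complex geometric expansion, and the binomial interchange, so that the prefactor $(-1)^{(r+1)j}$, the arguments $\sqrt5\,F_{k}\pm L_{r}\sqrt{-1}$ (respectively $\sqrt5\,F_{k}\pm L_{r}$) inside the inner sum, and the sign of the leading term all come out exactly as displayed. The two auxiliary Lucas identities used along the way are the standard analogues of (\ref{relation1}) and are quoted immediately before the statement, so nothing further is needed.
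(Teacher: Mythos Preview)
Your proposal is correct and follows exactly the route the paper indicates: the paper does not spell out a proof of Theorem~\ref{th:lucaconvo} but merely points to the generating function~(\ref{luca:gen}) together with the two Lucas identities displayed just before the statement, and directs the reader to imitate the detailed argument of Theorem~\ref{th:convo}. You have carried out precisely that imitation --- identifying the quadratic $N'D-ND'$, performing the polynomial division, completing the square via the second identity, making the parity split between the real expansion $\tfrac12\bigl((1+x)^{-s}\pm(1-x)^{-s}\bigr)$ and the complex one, and tracing the origin of the factor $(-1)^{(r+1)j}$ --- so there is nothing to add.
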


\section{Reciprocal sums of balancing numbers}  

In this section we consider several results on reciprocal sums of Balancing numbers and Lucas-balancing numbers.  Here $\fl{x}$ denotes the integer part of a real number of $x$.  

Many authors studied bounds for reciprocal sums involving terms from Fibonacci and other related numbers (e.g., see \cite{Komatsu2010,Komatsu2011,Kuhapatanakul,Ohtsuka,Zhang}).   In \cite{Holliday}, several identities are shown for generalized Fibonacci numbers $G_n$, defined by 
$$
G_n=a G_{n-1}+G_{n-2}\quad(n\ge 2),\quad G_0=0,\quad G_1=1, 
$$  
where $a$ is a positive integer. 
Some of them are the following. 

\begin{Prop}\label{fib:relations}  
\begin{align*} 
&{\rm (1)}\qquad \fl{\left(\sum_{k=n}^\infty\frac{1}{G_k}\right)^{-1}}=
\begin{cases}
G_n-G_{n-1}&\text{if $n$ is even and $n\ge 2$};\\
G_n-G_{n-1}-1&\text{if $n$ is odd and $n\ge 1$}\,.
\end{cases}\\ 
&{\rm (2)}\qquad \fl{\left(\sum_{k=n}^\infty\frac{1}{G_k^2}\right)^{-1}}=
\begin{cases}
a G_{n-1}G_n-1&\text{if $n$ is even and $n\ge 2$};\\
a G_{n-1}G_n&\text{if $n$ is odd and $n\ge 1$}\,.
\end{cases}\\ 
&{\rm (3)}\qquad \fl{\left(\sum_{k=n}^\infty\frac{1}{G_{2k}}\right)^{-1}}=
G_{2n}-G_{2n-2}-1\qquad(n\ge 1)\\   
&{\rm (4)}\qquad \fl{\left(\sum_{k=n}^\infty\frac{1}{G_{2k-1}}\right)^{-1}}=
G_{2n-1}-G_{2n-3}\qquad(n\ge 2)\\  
\end{align*}   
\end{Prop}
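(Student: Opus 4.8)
\medskip
\noindent\textbf{Proof sketch.} The plan is to handle all four parts through a single classical device. For a sequence of positive reals $(a_k)$ whose reciprocal tail $\sum_{k\ge n}1/a_k$ tends to $0$, and a candidate integer $b_n$, one has $\fl{\bigl(\sum_{k=n}^\infty 1/a_k\bigr)^{-1}}=b_n$ exactly when
$$
\frac{1}{b_n+1}<\sum_{k=n}^\infty\frac{1}{a_k}\le\frac{1}{b_n}\,,
$$
so everything reduces to a pair of one-sided estimates. Each estimate I would get from a telescoping comparison: if $(c_k)$ is positive with $c_k\to\infty$, $c_n=b_n$, and $1/a_k\le 1/c_k-1/c_{k+1}$ for every $k\ge n$, then summation gives $\sum_{k\ge n}1/a_k\le 1/c_n$; with $c_n=b_n+1$ and the reversed inequality (strict for at least one index), summation gives the matching lower bound. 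All of the comparisons I would verify by invoking the Binet form $G_n=(\phi^n-\psi^n)/(\phi-\psi)$ with $\phi+\psi=a$, $\phi\psi=-1$, together with the Cassini-type identity $G_{n+1}G_{n-1}-G_n^2=(-1)^n$, so that after clearing denominators and applying $G_{k+1}=aG_k+G_{k-1}$ each inequality becomes a polynomial inequality in $G_{k-1},G_k$ and $a$.

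For (1) the comparison sequence to use is $c_k=G_k-G_{k-1}$, whose value at $k=n$ is $b_n$ if $n$ is even and $b_n+1$ if $n$ is odd; for (2) it is $c_k=aG_{k-1}G_k$, and here the exact evaluation $1/(aG_{k-1}G_k)-1/(aG_kG_{k+1})=1/(G_{k-1}G_{k+1})=1/(G_k^2+(-1)^k)$ already explains why $\sum 1/G_k^2$ straddles $1/(aG_{n-1}G_n)$ with a defect that flips sign with the parity of $n$. The first thing I would check is that the naive one-term inequality $1/a_k\le 1/c_k-1/c_{k+1}$ holds for one parity of $k$ and reverses for the other; this is precisely the origin of the parity split in the statement. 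The remedy is to group the tail two terms at a time and prove the pairwise inequality $1/a_k+1/a_{k+1}\le 1/c_k-1/c_{k+2}$ for $k\equiv n\pmod 2$, which by the recurrence and Cassini is polynomial in $G_{k-1},G_k,a$ and holds for all $k$ past a small explicit bound, the finitely many leftover cases being checked directly; summing over $k=n,n+2,n+4,\dots$ then telescopes to the stated upper bound. The lower bounds I would obtain the same way, replacing the comparison sequence by the one shifted by $1$ and reversing the pairwise inequality.

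For (3) and (4) only indices of a single parity occur in the sum, so there is no oscillation and a genuine one-term telescoping works. For (3) I would take $c_k=G_{2k}-G_{2k-2}-1$ for the upper estimate and $c_k=G_{2k}-G_{2k-2}$ for the lower, check $1/G_{2k}\le 1/c_k-1/c_{k+1}$ (respectively $\ge$, strictly for some $k$) by reducing to a polynomial inequality in $G_{2k-2},G_{2k},a$ via the two-step recurrence $G_{2k+2}=(a^2+2)G_{2k}-G_{2k-2}$, and sum; (4) is identical with odd indices and the recurrence $G_{2k+1}=(a^2+2)G_{2k-1}-G_{2k-3}$.

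The main obstacle I anticipate is organizational rather than conceptual: running the two parity cases of (1) and (2) in parallel, verifying the pairwise polynomial inequalities uniformly in the parameter $a$, and determining exactly which small values of $n$ (and the degenerate case $b_n=0$, which occurs for $a=1$) must be treated by hand; one also has to make sure the telescoped lower bounds come out strict, so that the computed floor is not off by one at the left endpoint.
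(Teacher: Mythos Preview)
The paper does not prove this proposition at all: it is quoted verbatim from \cite{Holliday} as background, and no argument for it is supplied in the text. So there is no ``paper's own proof'' to compare your sketch against.

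That said, your plan is the standard one and matches exactly the template the paper uses later for the analogous balancing-number results (Theorems~4--10). Two remarks. First, your diagnosis for parts~(1) and~(2) is correct: because the Cassini identity for $G_n$ reads $G_{n-1}G_{n+1}-G_n^2=(-1)^n$, the one-term comparison
\[
\frac{1}{G_k-G_{k-1}}-\frac{1}{G_k}\;\gtrless\;\frac{1}{G_{k+1}-G_k}
\]
genuinely flips direction with the parity of $k$, so a two-term grouping is unavoidable; this is precisely what the cited paper does, and it is also what the present paper does in its proofs for the \emph{alternating} balancing sums. Second, for parts~(3) and~(4) your one-term telescoping with the step-two recurrence $G_{m+2}=(a^2+2)G_m-G_{m-2}$ is right, and in fact here the relevant Catalan-type identity $G_{2k}^2-G_{2k-2}G_{2k+2}$ has constant sign, so no grouping is needed; the paper's proof of the balancing analogue (Theorem~4) follows exactly this pattern. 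Your anticipated ``obstacles'' are the real ones, and none of them is serious.
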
  

We provide several analogous results about alternating sums of reciprocal balancing numbers $B_n$ and Lucas-balancing numbers $C_n$.  

\begin{theorem}  
Let $l$ be a fixed positive number.  Then for $n\ge 1$ we have 
\begin{align} 
\fl{\left(\sum_{k=n}^\infty\frac{1}{B_{l k}}\right)^{-1}}&=B_{l n}-B_{l(n-1)}-1\,,\label{b-l}\\ 
\fl{\left(\sum_{k=n}^\infty\frac{1}{C_{l k}}\right)^{-1}}&=C_{l n}-C_{l(n-1)}\,.
\label{lb-lc}
\end{align} 
\label{b-lb}
\end{theorem}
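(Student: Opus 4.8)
The plan is to prove both identities by the standard ``telescoping squeeze'' technique for reciprocal sums: show that the tail $S_n := \sum_{k=n}^\infty 1/B_{lk}$ satisfies
$$
\frac{1}{B_{ln}-B_{l(n-1)}} < S_n < \frac{1}{B_{ln}-B_{l(n-1)}-1}\,,
$$
which immediately gives $\fl{S_n^{-1}} = B_{ln}-B_{l(n-1)}-1$, and analogously for the Lucas-balancing tail with bounds $1/(C_{ln}-C_{l(n-1)}+1)$ and $1/(C_{ln}-C_{l(n-1)})$. To establish each inequality it suffices to find a telescoping comparison sequence. Concretely, for the upper bound on $S_n$ I would look for a sequence $a_k$ with $a_k - a_{k+1} > 1/B_{lk}$ for all $k \ge n$ (so that $S_n < a_n$) and $a_n = 1/(B_{ln}-B_{l(n-1)}-1)$; the natural ansatz is $a_k = 1/(B_{lk}-B_{l(k-1)}-1)$. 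For the lower bound, I would use $a_k = 1/(B_{lk}-B_{l(k-1)})$ and show $a_k - a_{k+1} < 1/B_{lk}$. Thus each half reduces to a single algebraic inequality between balancing numbers with indices $l(k-1), lk, l(k+1)$.

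The key reduction is therefore: after clearing denominators, the inequality $a_k - a_{k+1} \gtrless 1/B_{lk}$ becomes a polynomial inequality in the $B_{lj}$'s which, using the Binet form $B_m = (\alpha^m-\beta^m)/(4\sqrt2)$ with $\alpha\beta = 1$ and $\alpha+\beta=6$, can be rewritten in terms of $\alpha^{lk}$ and the constant $\alpha^l$. I would exploit the identities already available in the paper — especially $B_{a+b} = B_{a+1}B_b$-type relations and the product formula $B_{n-r}B_{n+r} = B_n^2 - B_r^2$ from \eqref{relation1}, together with $B_{lk} - B_{l(k-1)} = $ a linear combination that (via Lemma \ref{lem2009}(1)) can be expressed through $C$-values — to turn the difference $a_k - a_{k+1}$ into something manifestly comparable to $1/B_{lk}$. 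A cleaner route may be to substitute $x = \alpha^{lk}$ (large) and verify that the required inequality holds for all sufficiently large $x$ and then check the finitely many remaining small cases $k = n, n+1,\dots$ directly, or better, to show the inequality is equivalent to a statement like $B_{lk}B_{l(k+1)} > B_{lk} + B_{l(k+1)} - B_{l(k-1)} - \text{(bounded error)}$, which holds once $B_{lk}$ is moderately large — and the base case $n=1$ can be treated by hand since $B_l \ge B_1 = 1$ forces $l \ge 1$ and the numbers grow geometrically with ratio $\alpha^l \ge \alpha = 3+2\sqrt2 > 5$, giving plenty of room.

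For the Lucas-balancing identity \eqref{lb-lc} the structure is identical, with the comparison sequences $1/(C_{lk}-C_{l(k-1)})$ (lower) and $1/(C_{lk}-C_{l(k-1)}+1)$ (upper); here the relevant product identity is $C_{n-r}C_{n+r} = C_n^2 + C_r^2 - 1$ from \eqref{relation1} and the relation $C_n = \sqrt{8B_n^2+1}$ from \eqref{eq:c-b}, and since $C_0 = 1$ rather than $0$ the ``$-1$'' in the floor disappears — which is exactly the parity-free phenomenon already visible in Proposition \ref{fib:relations}(1) versus the even/odd split there; the absence of a split here is because $\alpha^l$ is a genuine algebraic integer $>1$ with $\beta^l = \alpha^{-l} > 0$, so there is no sign alternation to worry about (unlike the Fibonacci case where $\beta < 0$).

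\textbf{Main obstacle.} The hard part will be pinning down the telescoping comparison sequences so that the one-step inequalities $a_k - a_{k+1} \gtrless 1/B_{lk}$ come out \emph{in the correct direction for every} $k \ge n$, including the smallest index where the balancing numbers are not yet large; getting the constant offsets ($-1$ in the balancing case, $0$ in the Lucas-balancing case) to be exactly right, rather than off by one, requires care with the small cases and with how $B_{lk}-B_{l(k-1)}$ compares to $\alpha^{l(k-1)}(\alpha^l-1)/(4\sqrt2)$. I expect the cleanest presentation to isolate the inequality $\bigl(B_{lk}-B_{l(k-1)}\bigr)\bigl(B_{l(k+1)}-B_{lk}\bigr) > B_{lk}$ (and a matching one with the $-1$ offsets) and verify it via the Binet form, reducing everything to checking that $\alpha^l - 1 \ge $ a small explicit constant — which is automatic since $\alpha^l \ge \alpha = 3+2\sqrt2$.
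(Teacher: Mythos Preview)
Your approach is correct and matches the paper's: both establish the sandwich $\frac{1}{B_{ln}-B_{l(n-1)}} < S_n < \frac{1}{B_{ln}-B_{l(n-1)}-1}$ via exactly the telescoping comparison sequences you name, and the paper verifies the one-step inequality $a_k - a_{k+1} \gtrless 1/B_{lk}$ directly from the product identity $B_{l(k-1)}B_{l(k+1)} = B_{lk}^2 - B_l^2$ of \eqref{relation1} --- no Binet forms, asymptotic substitutions, or separate small-case checks are needed, so your fallback routes are unnecessary. One minor slip: in the $C$-case your labels ``lower'' and ``upper'' are swapped, since the sign in $C_{l(k-1)}C_{l(k+1)} = C_{lk}^2 + (C_l^2 - 1)$ is opposite to the balancing case, making $1/(C_{lk}-C_{l(k-1)})$ the \emph{upper} bound on the tail.
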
  
\begin{proof}  
By (\ref{relation1}), we have $B_{l n}^2=B_{l(n-1)}B_{l(n+1)}+B_l^2$.    
Since $B_{l(n+1)}-B_l^2>B_{l(n-1)}+1$ ($n\ge 1$), we have 
\begin{align*}  
\frac{1}{B_{l n}-B_{l(n-1)}-1}-\frac{1}{B_{l n}}&=\frac{1}{B_{l n}\left(\frac{B_{l n}}{B_{l(n-1)}+1}-1\right)}\\
&=\frac{1}{\frac{B_{l(n+1)}B_{l(n-1)}+B_l^2}{B_{l(n-1)}+1}-B_{l n}}\\
&=\frac{1}{B_{l(n+1)}-B_{l n}-\frac{B_{l(n+1)}-B_l^2}{B_{l(n-1)}+1}}\\
&>\frac{1}{B_{l(n+1)}-B_{l n}-1}\,. 
\end{align*}  
Thus, we have 
\begin{equation}  
\frac{1}{B_{l n}-B_{l(n-1)}-1}>\sum_{k=n}^\infty\frac{1}{B_{l k}}\,. 
\label{80} 
\end{equation}  
On the other hand, we have 
\begin{align*}  
\frac{1}{B_{l n}-B_{l(n-1)}}-\frac{1}{B_{l n}}&=\frac{1}{B_{l n}\left(\frac{B_{l n}}{B_{l(n-1)}}-1\right)}\\
&=\frac{1}{\frac{B_{l(n+1)}B_{l(n-1)}+B_l^2}{B_{l(n-1)}}-B_{l n}}\\
&=\frac{1}{B_{l(n+1)}-B_{l n}+\frac{B_l^2}{B_{l(n-1)}}}\\
&<\frac{1}{B_{l(n+1)}-B_{l n}}\,. 
\end{align*}  
Thus, we have 
\begin{equation}  
\sum_{k=n}^\infty\frac{1}{B_{l k}}>\frac{1}{B_{l n}-B_{l(n-1)}}\,. 
\label{81} 
\end{equation} 
Combining (\ref{80}) and (\ref{81}), we obtain the identity (\ref{b-l}).  
Similarly, by $C_{l n}^2=C_{l(n-1)}C_{l(n+1)}-C_l^2+1$, we obtain the identity (\ref{lb-lc}).  
\end{proof}

\begin{theorem}  
$$ 
\fl{\left(\sum_{k=n}^\infty\frac{(-1)^k}{B_k}\right)^{-1}}=
\begin{cases}  
B_n+B_{n-1}&\text{if $n$ is even};\\ 
-(B_n+B_{n-1}+1)&\text{if $n$ is odd}. 
\end{cases} 
$$ 
\end{theorem}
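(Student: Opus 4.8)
The plan is to mimic the proof of Theorem \ref{b-lb}, but now tracking the alternating signs carefully, so the main work is to establish a pair of telescoping-type inequalities that pin the tail $\sum_{k=n}^\infty(-1)^k/B_k$ between the reciprocals of two consecutive candidate values. Fix $n$ and write $S_n=\sum_{k=n}^\infty(-1)^k/B_k$. First I would check the sign: since $B_k$ is increasing, the series is alternating with terms decreasing in absolute value, so $S_n$ has the same sign as its first term $(-1)^n/B_n$; hence $S_n>0$ when $n$ is even and $S_n<0$ when $n$ is odd. This explains why the floor of $S_n^{-1}$ is positive in the even case and negative in the odd case, and why the odd-case answer carries the extra $-1$ (the floor of a negative number rounds down).

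Next, treating the even case, I would aim to show
$$
\frac{1}{B_n+B_{n-1}+1}<S_n<\frac{1}{B_n+B_{n-1}}\,,
$$
which gives $\fl{S_n^{-1}}=B_n+B_{n-1}$. The natural device is the ``two-step'' comparison used in Theorem \ref{b-lb}: group the alternating tail in consecutive pairs and use $B_{n}B_{n-1}$-type identities. Concretely, I expect the key algebraic fact to be a consequence of (\ref{relation1}) in the form $B_n^2-1=B_{n-1}B_{n+1}$, equivalently the ``addition law'' $B_{a+b+1}=B_{a+1}B_{b+1}-B_aB_b$ quoted in Section~2; these let one rewrite $B_n\pm B_{n-1}$ products cleanly. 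I would set $u_n=1/(B_n+B_{n-1})$ (or the $+1$ variant) and try to prove that $(-1)^n u_n - S_n$ has a definite sign by expressing
$$
(-1)^n u_n - S_n = (-1)^n u_n - \frac{(-1)^n}{B_n} - S_{n+1}
$$
and showing the right side can be compared with $\pm u_{n+1}$, so that an induction/telescoping on the whole tail closes. The computation $\frac{1}{B_n+B_{n-1}}-\frac{1}{B_n}=\frac{B_{n-1}}{B_n(B_n+B_{n-1})}$ and its $+1$ analogue should be combined with the identity $B_n^2 - B_{n-1}B_{n+1}=1$ to compare the result against $\frac{1}{B_{n+1}+B_n}$ (resp.\ $\pm$ that), exactly paralleling the chain of equalities in the proof of (\ref{b-l}).

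For the odd case I would not redo the work: writing $S_n = (-1)^n/B_n + S_{n+1}$ with $n$ odd, one has $-S_n = 1/B_n - S_{n+1}$, and since $S_{n+1}$ (even index) lies in $\bigl(\tfrac{1}{B_{n+1}+B_n+1},\tfrac{1}{B_{n+1}+B_n}\bigr)$ by the even case, I can bound $-S_n$ between $\tfrac1{B_n}-\tfrac1{B_{n+1}+B_n}$ and $\tfrac1{B_n}-\tfrac1{B_{n+1}+B_n+1}$ and simplify using $B_{n+1}=6B_n-B_{n-1}$ together with $B_n^2-1=B_{n-1}B_{n+1}$; this should yield $-S_n\in\bigl(\tfrac{1}{B_n+B_{n-1}+1},\tfrac{1}{B_n+B_{n-1}}\bigr)$, whence $\fl{S_n^{-1}}=\fl{-1/(-S_n)}=-(B_n+B_{n-1}+1)$. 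The main obstacle I anticipate is purely bookkeeping: getting the inequalities to go the correct way requires that certain $B$-polynomial expressions like $B_{n+1}-B_n-1$ versus $B_{n-1}$ (and similar) be positive for all $n\ge 1$, including the small cases $n=1,2$ where $B_0=0$ and $B_1=1$ make some denominators degenerate; these boundary cases will likely need to be verified by hand separately, and the alternating structure means one must be careful that the pairing of tail terms is grouped so that each group has the sign one wants rather than fighting cancellation.
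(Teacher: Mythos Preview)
Your overall plan---prove two ``step'' inequalities of the shape
\[
\frac{1}{B_n+B_{n-1}}>\frac{1}{B_n}-\frac{1}{B_{n+1}}+\frac{1}{B_{n+2}+B_{n+1}}
\quad\text{and}\quad
\frac{1}{B_n+B_{n-1}+1}<\frac{1}{B_n}-\frac{1}{B_{n+1}}+\frac{1}{B_{n+2}+B_{n+1}+1}
\]
via $B_n^2-B_{n-1}B_{n+1}=1$ and then telescope---is exactly what the paper does, and it works. There is, however, a real gap in your treatment of the odd case. You propose to deduce it from the even case by writing $-S_n=\tfrac{1}{B_n}-S_{n+1}$ and inserting the even-case bounds $S_{n+1}\in\bigl(\tfrac{1}{B_{n+1}+B_n+1},\tfrac{1}{B_{n+1}+B_n}\bigr)$. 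But the resulting upper bound,
\[
-S_n<\frac{1}{B_n}-\frac{1}{B_{n+1}+B_n+1}=\frac{B_{n+1}+1}{B_n(B_{n+1}+B_n+1)},
\]
is \emph{not} below $\tfrac{1}{B_n+B_{n-1}}$: the inequality $(B_{n+1}+1)(B_n+B_{n-1})<B_n(B_{n+1}+B_n+1)$ reduces (using $B_{n-1}B_{n+1}=B_n^2-1$) to $B_{n-1}<1$, which fails for every $n\ge 2$. A numerical check at $n=3$ confirms it: your method gives only $-S_3<205/8400\approx 0.02440$, whereas the needed bound is $-S_3<1/41\approx 0.02439$.

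The fix is easy and is what the paper actually does: the two telescoping inequalities above make no reference to the parity of $n$; they hold for all $n\ge 1$ (with the usual check at $n=1$ where $B_0=0$), and iterating them gives
\[
\frac{1}{B_n+B_{n-1}+1}<\sum_{i=0}^\infty\Bigl(\frac{1}{B_{n+2i}}-\frac{1}{B_{n+2i+1}}\Bigr)<\frac{1}{B_n+B_{n-1}}
\]
for every $n$. Since this paired sum equals $S_n$ when $n$ is even and $-S_n$ when $n$ is odd, both parities drop out simultaneously---no separate reduction is needed. So keep your main argument, but drop the shortcut and simply observe that the telescoping already covers both cases.
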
  

\begin{proof}  
First, we prove that 
\begin{equation}  
\frac{1}{B_{n+2}+B_{n+1}}+\frac{1}{B_n}-\frac{1}{B_{n+1}}<\frac{1}{B_n+B_{n-1}}\,.
\label{eq10} 
\end{equation}  
Since $B_n^2-B_{n-1}B_{n+1}=1$ by (\ref{relation2}), we have  
\begin{align*}  
&\left(\frac{1}{B_{n+1}}-\frac{1}{B_{n+2}+B_{n+1}}\right)-\left(\frac{1}{B_n}-\frac{1}{B_n+B_{n-1}}\right)\\
&=\frac{B_{n+2}}{B_{n+1}(B_{n+2}+B_{n+1})}-\frac{B_{n-1}}{B_n(B_n+B_{n-1})}\\
&=\frac{1}{B_{n+1}\left(1+\frac{B_n}{B_{n+1}}+\frac{1}{B_{n+2}B_{n+1}}\right)}-\frac{1}{B_{n}\left(\frac{B_{n+1}}{B_{n}}+\frac{1}{B_{n}B_{n-1}}+1\right)}\\
&=\frac{1}{B_{n+1}+B_n+\frac{1}{B_{n+2}}}-\frac{1}{B_{n+1}+B_n+\frac{1}{B_{n-1}}}\\
&>0\,. 
\end{align*}  
Therefore, repeating (\ref{eq10}), we obtain 
\begin{equation} 
\frac{1}{B_n+B_{n-1}}>\sum_{i=0}^\infty\left(\frac{1}{B_{n+2i}}-\frac{1}{B_{n+2i+1}}\right)\,.  
\label{eq15} 
\end{equation}   
Second, we prove that 
\begin{equation}   
\frac{1}{B_n+B_{n-1}+1}<\frac{1}{B_n}-\frac{1}{B_{n+1}}+\frac{1}{B_{n+2}+B_{n+1}+1}\,. 
\label{eq20}
\end{equation}  
Since 
$$
\frac{B_{n+1}-1}{B_{n-1}+1}>1>\frac{B_n-1}{B_{n+2}+1}\,, 
$$ 
we have  
\begin{align*}  
&\left(\frac{1}{B_n}-\frac{1}{B_n+B_{n-1}+1}\right)-\left(\frac{1}{B_{n+1}}-\frac{1}{B_{n+2}+B_{n+1}+1}\right)\\
&=\frac{1}{B_n\left(\frac{B_n}{B_{n-1}+1}+1\right)}-\frac{1}{B_{n+1}\left(\frac{B_{n+1}}{B_{n+2}+1}+1\right)}\\
&=\frac{1}{\frac{B_{n-1}B_{n+1}+1}{B_{n-1}+1}+B_n}-\frac{1}{\frac{B_{n+2}B_{n}+1}{B_{n+2}+1}+B_{n+1}}\\
&=\frac{1}{B_{n+1}+B_n-\frac{B_{n+1}-1}{B_{n-1}+1}}-\frac{1}{B_{n+1}+B_n-\frac{B_{n}-1}{B_{n+2}+1}}\\
&>0\,. 
\end{align*}  
Therefore, repeating (\ref{eq20}), we obtain  
\begin{equation} 
\frac{1}{B_n+B_{n-1}+1}<\sum_{i=0}^\infty\left(\frac{1}{B_{n+2i}}-\frac{1}{B_{n+2i+1}}\right)\,.  
\label{eq25} 
\end{equation}  
Combining (\ref{eq15}) and (\ref{eq25}), we get 
$$
\frac{1}{B_n+B_{n-1}+1}<\sum_{k=n}^\infty\frac{(-1)^k}{B_k}<\frac{1}{B_n+B_{n-1}}
$$ 
if $n$ is even, and 
$$
-\frac{1}{B_n+B_{n-1}+1}>\sum_{k=n}^\infty\frac{(-1)^k}{B_k}>-\frac{1}{B_n+B_{n-1}}
$$ 
if $n$ is odd. 
\end{proof}

\begin{theorem}  
$$ 
\fl{\left(\sum_{k=n}^\infty\frac{(-1)^k}{B_k^2}\right)^{-1}}=
\begin{cases}  
B_n^2+B_{n-1}^2&\text{if $n$ is even};\\ 
-(B_n^2+B_{n-1}^2+1)&\text{if $n$ is odd}. 
\end{cases} 
$$ 
\end{theorem}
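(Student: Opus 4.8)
The plan is to mimic the proof of the previous theorem (the one for $\sum_{k=n}^\infty (-1)^k/B_k$) but with squares in the denominators, using the identity $B_n^2 - B_{n-1}B_{n+1} = 1$ from~(\ref{relation2}). I would set $u_n := B_n^2 + B_{n-1}^2$ and $v_n := B_n^2 + B_{n-1}^2 + 1$, and show the two-sided bound
$$
\frac{1}{v_n} < \sum_{i=0}^\infty\left(\frac{1}{B_{n+2i}^2}-\frac{1}{B_{n+2i+1}^2}\right) < \frac{1}{u_n}\,,
$$
which, since the left side of the inner sum is $\sum_{k=n}^\infty (-1)^k/B_k^2$ up to the sign $(-1)^n$, yields the claimed floor value for $n$ even, and its negative for $n$ odd exactly as in the preceding proof.

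The upper bound I would get from a telescoping inequality of the form
$$
\frac{1}{B_{n+2}^2+B_{n+1}^2} + \frac{1}{B_n^2} - \frac{1}{B_{n+1}^2} < \frac{1}{B_n^2+B_{n-1}^2}\,,
$$
equivalently
$$
\left(\frac{1}{B_{n+1}^2}-\frac{1}{B_{n+2}^2+B_{n+1}^2}\right) - \left(\frac{1}{B_n^2}-\frac{1}{B_n^2+B_{n-1}^2}\right) > 0\,,
$$
and likewise the lower bound from
$$
\left(\frac{1}{B_n^2}-\frac{1}{B_n^2+B_{n-1}^2+1}\right) - \left(\frac{1}{B_{n+1}^2}-\frac{1}{B_{n+2}^2+B_{n+1}^2+1}\right) > 0\,.
$$
In each case I would rewrite the difference $\frac{1}{B_n^2}-\frac{1}{B_n^2+B_{n-1}^2}$ as $\frac{B_{n-1}^2}{B_n^2(B_n^2+B_{n-1}^2)} = \frac{1}{(B_n^2/B_{n-1}^2 + 1)B_n^2}$ and then use $B_n^2 = B_{n-1}B_{n+1}+1$ to simplify $B_n^2/B_{n-1}^2 = B_{n+1}/B_{n-1} + 1/B_{n-1}^2$, turning each term into something of the shape $1/(B_{n+1}^2/B_n^2\cdot(\text{stuff}) + \cdots)$ so that the two expressions being subtracted have the same leading part and the comparison reduces to comparing small correction terms, just as the $\frac{1}{B_{n+1}+B_n+1/B_{n+2}}$ versus $\frac{1}{B_{n+1}+B_n+1/B_{n-1}}$ step did before.

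The main obstacle is getting these algebraic reductions to land on a clean monotone comparison: with squares the bookkeeping is heavier than in the linear case, and I would need the analogue of the inequality $\frac{B_{n+1}-1}{B_{n-1}+1} > 1 > \frac{B_n-1}{B_{n+2}+1}$ — here it will be a comparison of ratios like $\frac{B_{n+1}^2 + \text{lower order}}{B_{n-1}^2 + \text{lower order}}$ against the corresponding ratio shifted by one index, and I must check the finitely many small-$n$ cases separately (the stated result begins at $n\ge 1$, and the telescoping chain only kicks in once the monotonicity inequality holds, which for balancing numbers — growing like $\alpha^n$ with $\alpha = 3+2\sqrt2 \approx 5.83$ — should hold immediately). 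I also need $\sum_{k=n}^\infty 1/B_k^2 < \infty$, which is clear from the exponential growth of $B_k$, and I need $v_n > 0$ so the floor statement makes sense, which is automatic. Once the two telescoping inequalities are in hand, combining them and splitting on the parity of $n$ is identical to the previous proof, so I would write "Combining the two inequalities, we get $\frac{1}{v_n} < \sum_{k=n}^\infty (-1)^k/B_k^2 < \frac{1}{u_n}$ if $n$ is even, and the reversed chain with overall minus sign if $n$ is odd," and conclude.
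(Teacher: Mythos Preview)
Your proposal is correct and follows essentially the same approach as the paper: the paper proves the two telescoping inequalities you name, using $B_n^2=B_{n-1}B_{n+1}+1$ to rewrite each difference as $1/(B_{n+1}^2+B_n^2+\text{correction})$ and then comparing the corrections (for the lower bound the comparison is $\frac{B_{n+1}^2-2B_{n+1}B_{n-1}-1}{B_{n-1}^2+1}>0>\frac{B_{n}^2-2B_{n+2}B_{n}-1}{B_{n+2}^2+1}$, which holds for all $n\ge 1$, so no separate small-$n$ check is needed). The parity split at the end is exactly as you describe.
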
  
\begin{proof} 
First, we show that 
\begin{equation}  
\frac{1}{B_{n+2}^2+B_{n+1}^2}+\frac{1}{B_n^2}-\frac{1}{B_{n+1}^2}<\frac{1}{B_n^2+B_{n-1}^2}\,.
\label{eq30} 
\end{equation}  
Since $B_n^2-B_{n-1}B_{n+1}=1$, we have  
\begin{align*}  
&\left(\frac{1}{B_{n+1}^2}-\frac{1}{B_{n+2}^2+B_{n+1}^2}\right)-\left(\frac{1}{B_n^2}-\frac{1}{B_n^2+B_{n-1}^2}\right)\\
&=\frac{B_{n+2}^2}{B_{n+1}^2(B_{n+2}^2+B_{n+1}^2)}-\frac{B_{n-1}^2}{B_n^2(B_n^2+B_{n-1}^2)}\\
&=\frac{1}{B_{n+1}^2+\frac{(B_{n+2}B_n+1)^2}{B_{n+2}^2}}-\frac{1}{B_n^2+\frac{(B_{n+1}B_{n-1}+1)^2}{B_{n-1}^2}}\\ 
&=\frac{1}{B_{n+1}^2+B_n^2+\frac{2 B_n}{B_{n+2}}+\frac{1}{B_{n+2}^2}}-\frac{1}{B_{n+1}^2+B_n^2+\frac{2 B_{n+1}}{B_{n-1}}+\frac{1}{B_{n-1}^2}}\\ 
&>0\,. 
\end{align*}  
Therefore, repeating (\ref{eq30}), we obtain 
\begin{equation} 
\frac{1}{B_n^2+B_{n-1}^2}>\sum_{i=0}^\infty\left(\frac{1}{B_{n+2i}^2}-\frac{1}{B_{n+2i+1}^2}\right)\,.  
\label{eq35} 
\end{equation}   
Next, we prove that 
\begin{equation}   
\frac{1}{B_n^2+B_{n-1}^2+1}<\frac{1}{B_n^2}-\frac{1}{B_{n+1}^2}+\frac{1}{B_{n+2}^2+B_{n+1}^2+1}\,. 
\label{eq40}
\end{equation}  
Since 
$$
\frac{B_{n+1}^2-2 B_{n+1}B_{n-1}-1}{B_{n-1}^2+1}>0>\frac{B_{n}^2-2 B_{n+2}B_{n}-1}{B_{n+2}^2+1}\,, 
$$ 
we have  
\begin{align*}  
&\left(\frac{1}{B_n^2}-\frac{1}{B_n^2+B_{n-1}^2+1}\right)-\left(\frac{1}{B_{n+1}^2}-\frac{1}{B_{n+2}^2+B_{n+1}^2+1}\right)\\
&=\frac{1}{B_n^2\left(\frac{B_n^2}{B_{n-1}^2+1}+1\right)}-\frac{1}{B_{n+1}^2\left(\frac{B_{n+1}^2}{B_{n+2}^2+1}+1\right)}\\
&=\frac{1}{\frac{(B_{n+1}B_{n-1}+1)^2}{B_{n-1}^2+1}+B_n^2}-\frac{1}{\frac{(B_{n+2}B_{n}+1)^2}{B_{n+2}^2+1}+B_{n+1}^2}\\  
&=\frac{1}{B_{n+1}^2+B_n^2-\frac{B_{n+1}^2-2 B_{n+1}B_{n-1}-1}{B_{n-1}^2+1}}-\frac{1}{B_{n+1}^2+B_n^2-\frac{B_{n}^2-2 B_{n+2}B_{n}-1}{B_{n+2}^2+1}}\\
&>0\,. 
\end{align*}  
Therefore, repeating (\ref{eq20}), we obtain  
\begin{equation} 
\frac{1}{B_n^2+B_{n-1}^2+1}<\sum_{i=0}^\infty\left(\frac{1}{B_{n+2i}^2}-\frac{1}{B_{n+2i+1}^2}\right)\,.  
\label{eq45} 
\end{equation}  
Combining (\ref{eq35}) and (\ref{eq45}), we get 
$$
\frac{1}{B_n^2+B_{n-1}^2+1}<\sum_{k=n}^\infty\frac{(-1)^k}{B_k^2}<\frac{1}{B_n^2+B_{n-1}^2}
$$ 
if $n$ is even, and 
$$
-\frac{1}{B_n^2+B_{n-1}^2+1}>\sum_{k=n}^\infty\frac{(-1)^k}{B_k^2}>-\frac{1}{B_n^2+B_{n-1}^2}
$$ 
if $n$ is odd. 
\end{proof}

\begin{theorem}  
$$ 
\fl{\left(\sum_{k=n}^\infty\frac{(-1)^k}{B_{2 k}}\right)^{-1}}=
\begin{cases}  
B_{2 n}+B_{2 n-2}&\text{if $n$ is even};\\ 
-(B_{2 n}+B_{2 n-2}+1)&\text{if $n$ is odd}. 
\end{cases}  
$$ 
\end{theorem}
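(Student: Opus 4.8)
The plan is to reproduce, with the index doubled, the telescoping argument used for $\sum_{k=n}^\infty(-1)^k/B_k$. The role played there by $B_n^2-B_{n-1}B_{n+1}=1$ will now be played by the specialization of (\ref{relation1}) to $B_{2n-2}B_{2n+2}=B_{2n}^2-B_2^2$, that is, $B_{2n}^2=B_{2n-2}B_{2n+2}+36$ (since $B_2=6$). First I would pair consecutive terms to get
$$
\sum_{k=n}^\infty\frac{(-1)^k}{B_{2k}}=(-1)^n\sum_{i=0}^\infty\left(\frac{1}{B_{2n+4i}}-\frac{1}{B_{2n+4i+2}}\right)\,,
$$
so that it suffices to prove, for every $n\ge 1$,
$$
\frac{1}{B_{2n}+B_{2n-2}+1}<\sum_{i=0}^\infty\left(\frac{1}{B_{2n+4i}}-\frac{1}{B_{2n+4i+2}}\right)<\frac{1}{B_{2n}+B_{2n-2}}\,;
$$
the two cases of the theorem then follow from $(-1)^n=\pm1$ exactly as in the preceding theorem.

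For the upper bound I would establish the one-step inequality
$$
\frac{1}{B_{2n+4}+B_{2n+2}}+\frac{1}{B_{2n}}-\frac{1}{B_{2n+2}}<\frac{1}{B_{2n}+B_{2n-2}}\qquad(n\ge 1)
$$
and iterate it, using that the leftover term $1/\bigl(B_{2(n+2N)}+B_{2(n+2N)-2}\bigr)$ tends to $0$. Rearranged, it reads
$$
\frac{B_{2n+4}}{B_{2n+2}(B_{2n+4}+B_{2n+2})}>\frac{B_{2n-2}}{B_{2n}(B_{2n}+B_{2n-2})}\,;
$$
for $n\ge 2$ both sides are positive, and inverting and applying the doubled identity above reduces this to
$$
B_{2n+2}+B_{2n}+\frac{36}{B_{2n+4}}<B_{2n}+B_{2n+2}+\frac{36}{B_{2n-2}}\,,
$$
i.e.\ to $B_{2n+4}>B_{2n-2}$; for $n=1$ the right-hand side of the rearranged inequality is $0$ (because $B_0=0$) while the left-hand side is positive, so it holds trivially.

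For the lower bound I would similarly establish
$$
\frac{1}{B_{2n}+B_{2n-2}+1}<\frac{1}{B_{2n}}-\frac{1}{B_{2n+2}}+\frac{1}{B_{2n+4}+B_{2n+2}+1}\qquad(n\ge 1)
$$
and iterate. Writing it as
$$
\frac{1}{B_{2n}}-\frac{1}{B_{2n}+B_{2n-2}+1}>\frac{1}{B_{2n+2}}-\frac{1}{B_{2n+4}+B_{2n+2}+1}
$$
and simplifying each side with the same identity (here one only ever divides by $B_{2n-2}+1\ge1$, so no case distinction is needed) turns it into
$$
\frac{B_{2n+2}-36}{B_{2n-2}+1}>1>\frac{B_{2n}-36}{B_{2n+4}+1}\,,
$$
which holds for all $n\ge 1$ because $B_{2n+2}-B_{2n-2}\ge B_4-B_0=204>37$ and $B_{2n+4}>B_{2n}>B_{2n}-37$ (for $n=1$ the right-hand quantity is even negative, as $B_2=6<36$).

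Combining the two one-step inequalities by telescoping yields the displayed sandwich, and reading off $(-1)^n=\pm1$ gives the stated value of the floor. The only real work is the pair of algebraic reductions of the one-step inequalities; the mild subtlety I expect to be the main obstacle is the degenerate behaviour at $n=1$, where $B_{2n-2}=B_0=0$, which affects only the upper-bound reduction and is disposed of by the trivial observation that a positive quantity exceeds $0$.
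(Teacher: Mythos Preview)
Your proposal is correct and follows essentially the same telescoping one-step-inequality scheme as the paper. The only notable difference is that you reach the $36$ correction terms directly via $B_{2n}^2=B_{2n-2}B_{2n+2}+B_2^2$ from (\ref{relation1}), whereas the paper combines $B_m^2=B_{m-1}B_{m+1}+1$ with the auxiliary identity $B_mB_{m-4}-B_{m-1}B_{m-3}=-35$; your route is slightly cleaner and, by treating $n=1$ separately, also sidesteps the implicit division by $B_{2n-2}$ that the paper's computation performs.
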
 
\begin{proof}  
First, we prove 
\begin{equation}  
\frac{1}{B_{2 n}}-\frac{1}{B_{2 n+2}}+\frac{1}{B_{2 n+4}+B_{2 n+2}}<\frac{1}{B_{2 n}+B_{2 n-2}}\,. 
\label{45}
\end{equation} 
Notice that by the recurrence relation $B_n=6 B_{n-1}-B_{n-2}$, we have 
\begin{align} 
B_n B_{n-4}-B_{n-1}B_{n-3}&=B_{n-1}B_{n-5}-B_{n-2}B_{n-4}\notag\\
&=\cdots\notag\\
&=B_4 B_0-B_3 B_1=-35\,. 
\label{rec-2}
\end{align} 
Since 
\begin{align*} 
&B_{2 n+2}\left(\frac{B_{2 n+2}}{B_{2 n+4}}+1\right)=B_{2 n+2}+\frac{B_{2 n+3}B_{2 n+1}+1}{B_{2 n+4}}\\
&=B_{2 n+2}+B_{2 n}+\frac{-B_{2 n+4}B_{2 n}+B_{2 n+3}B_{2 n+1}+1}{B_{2 n+4}}\\
&=B_{2 n+2}+B_{2 n}+\frac{36}{B_{2 n+4}}
\end{align*} 
and 
\begin{align*} 
&B_{2 n}\left(\frac{B_{2 n}}{B_{2 n-2}}+1\right)=\frac{B_{2 n+1}B_{2 n-1}+1}{B_{2 n-2}}+B_{2 n}\\
&=B_{2 n+2}+B_{2 n}+\frac{-B_{2 n+2}B_{2 n-2}+B_{2 n+1}B_{2 n-1}+1}{B_{2 n-2}}\\
&=B_{2 n+2}+B_{2 n}+\frac{36}{B_{2 n-2}}\,, 
\end{align*} 
we have 
\begin{align*} 
&\left(\frac{1}{B_{2 n+2}}-\frac{1}{B_{2 n+4}+B_{2 n+2}}\right)-\left(\frac{1}{B_{2 n}}-\frac{1}{B_{2 n}+B_{2 n-2}}\right)\\ 
&=\frac{1}{B_{2 n+2}\left(\frac{B_{2 n+2}}{B_{2 n+4}}+1\right)}-\frac{1}{B_{2 n}\left(\frac{B_{2 n}}{B_{2 n-2}}+1\right)}\\
&=\frac{1}{B_{2 n+2}+B_{2 n}+\frac{36}{B_{2 n+4}}}-\frac{1}{B_{2 n+2}+B_{2 n}+\frac{36}{B_{2 n-2}}}\\ 
&>0\,,
\end{align*}
yielding (\ref{45}).  

Next, we prove that 
\begin{equation}  
\frac{1}{B_{2 n}+B_{2 n-2}+1}<\frac{1}{B_{2 n}}-\frac{1}{B_{2 n+2}}+\frac{1}{B_{2 n+4}+B_{2 n+2}+1}\,. 
\label{56} 
\end{equation}  
By (\ref{rec-2}), we get 
\begin{align*} 
&B_{2 n}\left(\frac{B_{2 n}}{B_{2 n-2}+1}+1\right)=\frac{B_{2 n+1}B_{2 n-1}+1}{B_{2 n-2}+1}+B_{2 n}\\
&=B_{2 n+2}+B_{2 n}-\frac{B_{2 n+2}B_{2 n-2}-B_{2 n+1}B_{2 n-1}+B_{2 n+2}-1}{B_{2 n-2}+1}\\
&=B_{2 n+2}+B_{2 n}-\frac{B_{2 n+2}-36}{B_{2 n-2}+1}
\end{align*} 
and 
\begin{align*} 
&B_{2 n+2}\left(\frac{B_{2 n+2}}{B_{2 n+4}+1}+1\right)=B_{2 n+2}+\frac{B_{2 n+3}B_{2 n+1}+1}{B_{2 n+4}+1}\\
&=B_{2 n+2}+B_{2 n}-\frac{B_{2 n+4}B_{2 n}-B_{2 n+3}B_{2 n+1}+B_{2 n}-1}{B_{2 n+4}+1}\\
&=B_{2 n+2}+B_{2 n}-\frac{B_{2 n}-36}{B_{2 n+4}+1}\,. 
\end{align*} 
Since 
$$
\frac{B_{2 n}-36}{B_{2 n+4}+1}<\frac{B_{2 n+2}-36}{B_{2 n-2}+1}\,, 
$$
we obtain 
\begin{align*} 
&\left(\frac{1}{B_{2 n}}-\frac{1}{B_{2 n}+B_{2 n-2}+1}\right)-\left(\frac{1}{B_{2 n+2}}-\frac{1}{B_{2 n+4}+B_{2 n+2}+1}\right)\\ 
&=\frac{1}{B_{2 n}\left(\frac{B_{2 n}}{B_{2 n-2}+1}+1\right)}-\frac{1}{B_{2 n+2}\left(\frac{B_{2 n+2}}{B_{2 n+4}+1}+1\right)}\\
&>0\,,
\end{align*}
yielding (\ref{56}).  
Repeating applying (\ref{45}) and (\ref{56}), we have for even $n$ 
$$
\frac{1}{B_{2 n}+B_{2 n-2}+1}<\sum_{k=n}^\infty\frac{(-1)^k}{B_{2 k}}<\frac{1}{B_{2 n}+B_{2 n-2}}\,. 
$$ 
\end{proof}

The following odd case can be proven similarly, so the proof is omitted.  

\begin{theorem}  
$$ 
\fl{\left(\sum_{k=n}^\infty\frac{(-1)^k}{B_{2 k+1}}\right)^{-1}}=
\begin{cases}  
B_{2 n+1}+B_{2 n-1}&\text{if $n$ is even};\\ 
-(B_{2 n+1}+B_{2 n-1}+1)&\text{if $n$ is odd}. 
\end{cases}  
$$ 
\end{theorem}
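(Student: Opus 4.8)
The plan is to run the argument of the preceding theorem (the $B_{2k}$ case) with the pair $(B_{2n},B_{2n-2})$ replaced throughout by $(B_{2n+1},B_{2n-1})$, and with the identity $B_{2n}^2=B_{2n-1}B_{2n+1}+1$ replaced by the two instances of (\ref{relation1}) with $r=2$,
$$
B_{2n+1}^2=B_{2n-1}B_{2n+3}+B_2^2,\qquad B_{2n+3}^2=B_{2n+1}B_{2n+5}+B_2^2,
$$
where $B_2=6$, so $B_2^2=36$. The two estimates to be proven, valid for all $n\ge 1$, are the analogues of (\ref{45}) and (\ref{56}):
$$
\frac{1}{B_{2n+1}}-\frac{1}{B_{2n+3}}+\frac{1}{B_{2n+5}+B_{2n+3}}<\frac{1}{B_{2n+1}+B_{2n-1}}
$$
and
$$
\frac{1}{B_{2n+1}+B_{2n-1}+1}<\frac{1}{B_{2n+1}}-\frac{1}{B_{2n+3}}+\frac{1}{B_{2n+5}+B_{2n+3}+1}\,.
$$

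For the first, rewrite it as $\frac{1}{B_{2n+1}}-\frac{1}{B_{2n+1}+B_{2n-1}}<\frac{1}{B_{2n+3}}-\frac{1}{B_{2n+5}+B_{2n+3}}$ and use $\frac{1}{B_m}-\frac{1}{B_m+B_{m'}}=\bigl(B_m(B_m/B_{m'}+1)\bigr)^{-1}$ together with the two displayed identities to get
$$
B_{2n+1}\left(\frac{B_{2n+1}}{B_{2n-1}}+1\right)=B_{2n+3}+B_{2n+1}+\frac{36}{B_{2n-1}},\qquad B_{2n+3}\left(\frac{B_{2n+3}}{B_{2n+5}}+1\right)=B_{2n+1}+B_{2n+3}+\frac{36}{B_{2n+5}},
$$
so the inequality collapses to $B_{2n-1}<B_{2n+5}$. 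For the second, the same manipulation with $B_{2n-1}+1$ and $B_{2n+5}+1$ in the denominators gives
$$
B_{2n+1}\left(\frac{B_{2n+1}}{B_{2n-1}+1}+1\right)=B_{2n+3}+B_{2n+1}-\frac{B_{2n+3}-36}{B_{2n-1}+1},\qquad B_{2n+3}\left(\frac{B_{2n+3}}{B_{2n+5}+1}+1\right)=B_{2n+1}+B_{2n+3}-\frac{B_{2n+1}-36}{B_{2n+5}+1},
$$
and the inequality reduces to $\frac{B_{2n+3}-36}{B_{2n-1}+1}>\frac{B_{2n+1}-36}{B_{2n+5}+1}$, once one knows both denominators above are positive.

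With these two estimates established, the telescoping is exactly as in the preceding proof: applying the first estimate successively at $n,n+2,n+4,\dots$, each time expanding the trailing term $\frac{1}{B_{2(n+2j)+1}+B_{2(n+2j)-1}}$, and letting the index tend to infinity yields $\frac{1}{B_{2n+1}+B_{2n-1}}>\sum_{k=n}^\infty\frac{(-1)^{k-n}}{B_{2k+1}}$, while the second estimate yields $\frac{1}{B_{2n+1}+B_{2n-1}+1}<\sum_{k=n}^\infty\frac{(-1)^{k-n}}{B_{2k+1}}$. When $n$ is even this last sum is $\sum_{k=n}^\infty\frac{(-1)^k}{B_{2k+1}}$, so its reciprocal lies strictly between $B_{2n+1}+B_{2n-1}$ and $B_{2n+1}+B_{2n-1}+1$ and the floor is $B_{2n+1}+B_{2n-1}$; when $n$ is odd the sum $\sum_{k=n}^\infty\frac{(-1)^k}{B_{2k+1}}$ is the negative of it, so its reciprocal lies strictly between $-(B_{2n+1}+B_{2n-1}+1)$ and $-(B_{2n+1}+B_{2n-1})$ and the floor is $-(B_{2n+1}+B_{2n-1}+1)$, as claimed. (As in the preceding theorem, only the even-$n$ case needs genuine work; the odd-$n$ case is the negation.)

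The main obstacle is the reduced inequality $\frac{B_{2n+3}-36}{B_{2n-1}+1}>\frac{B_{2n+1}-36}{B_{2n+5}+1}$ together with the positivity of the associated denominators, for small $n$: since $B_3=35<36$, the numerator $B_{2n+1}-36$ is negative for $n=1$, so one cannot simply compare numerators and denominators. This is handled by checking $n=1$ directly from the values $B_1=1,\,B_3=35,\,B_5=1189,\,B_7=40391$, and for $n\ge 2$ observing $B_{2n+1}-36\ge B_5-36>0$, so that $B_{2n+1}\le B_{2n+3}$ and $B_{2n-1}+1\le B_{2n+5}+1$ give the inequality at once, while $\frac{B_{2n+3}-36}{B_{2n-1}+1}<\tfrac12 B_{2n+3}<B_{2n+1}+B_{2n+3}$ and the analogous bound on the other term keep both denominators positive. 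All remaining steps are the same bookkeeping as in the $B_{2k}$ theorem.
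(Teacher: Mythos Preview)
Your proof is correct and is precisely the argument the paper has in mind: the paper omits this proof entirely, saying only that ``the following odd case can be proven similarly,'' and you have carried out that similar argument in full. The only cosmetic difference from the paper's $B_{2k}$ proof is that you obtain the constant $36$ directly from the Catalan-type identity (\ref{relation1}) with $r=2$, i.e.\ $B_{2n+1}^2=B_{2n-1}B_{2n+3}+B_2^2$, whereas the paper (in the even case) reaches the same $36$ by combining $B_m^2=B_{m-1}B_{m+1}+1$ with the telescoping relation (\ref{rec-2}); your route is slightly more direct. One remark: your separate worry about positivity of the ``associated denominators'' is in fact automatic, since each of them is the reciprocal of a difference $\frac{1}{B_M}-\frac{1}{B_M+B_{M'}+1}>0$; but the explicit check does no harm.
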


If two consecutive balancing numbers appear, we have the following result. 

\begin{theorem}  
$$ 
\fl{\left(\sum_{k=n}^\infty\frac{(-1)^k}{B_{k}B_{k+1}}\right)^{-1}}=
\begin{cases}  
B_{n}B_{n+1}+B_{n-1}B_n&\text{if $n$ is even};\\ 
-(B_{n}B_{n+1}+B_{n-1}B_n+1)&\text{if $n$ is odd}. 
\end{cases} 
$$ 
\end{theorem}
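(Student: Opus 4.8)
The plan is to mimic the two-sided squeeze used in the preceding theorems, now with the product $B_kB_{k+1}$ in place of $B_k$ or $B_k^2$. Write $a_k:=B_kB_{k+1}$ and $S_n:=\sum_{k=n}^\infty (-1)^k/a_k$. As before I would split into the upper bound (claiming $S_n$, for even $n$, is less than $1/(a_n+a_{n-1})$) and the lower bound (claiming $S_n$ exceeds $1/(a_n+a_{n-1}+1)$), each obtained by showing the relevant telescoping inequality between consecutive pairs $(2i,2i+1)$ and then summing over $i\ge 0$. The odd-$n$ case follows by negating. Concretely, for the upper bound I would prove
\begin{equation*}
\frac{1}{a_n}-\frac{1}{a_{n+1}}+\frac{1}{a_{n+2}+a_{n+1}}<\frac{1}{a_n+a_{n-1}}\,,
\end{equation*}
equivalently, that the difference
\begin{equation*}
\left(\frac{1}{a_{n+1}}-\frac{1}{a_{n+2}+a_{n+1}}\right)-\left(\frac{1}{a_n}-\frac{1}{a_n+a_{n-1}}\right)
\end{equation*}
is positive. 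Rewriting each bracket as $1/\bigl(a_j(a_j/a_{j\pm1}+1)\bigr)$ reduces everything to comparing $a_{n+1}(a_{n+1}/a_{n+2}+1)$ with $a_n(a_n/a_{n-1}+1)$, and the inequality will follow once both quantities are expressed in the common form $a_{n+1}+a_n+(\text{small error term})$, with the error term for the $n+1$ bracket smaller.

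The key computational input, replacing the roles of $B_n^2-B_{n-1}B_{n+1}=1$ and the four-step relation $B_nB_{n-4}-B_{n-1}B_{n-3}=-35$ from the earlier proofs, is an identity for $a_{n+1}a_{n-1}-a_n^2=B_{n+2}B_nB_{n-1}B_{n+1}-B_{n+1}^2B_n^2$, and more to the point the ``mixed'' quantities $a_na_{n-2}-(\text{shift})$ that arise when one clears denominators. Using $B_{n-1}B_{n+1}=B_n^2-1$ from (\ref{relation2}) repeatedly, together with the recurrence $B_{n+1}=6B_n-B_{n-1}$, one finds $a_{n+1}+a_n = B_{n+1}(B_{n+2}+B_n)=6B_{n+1}^2$ and similarly $a_n+a_{n-1}=6B_n^2$, so in fact $a_k+a_{k-1}=6B_k^2$. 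That is a genuinely convenient collapse: it means $1/(a_n+a_{n-1})=1/(6B_n^2)$, and the whole theorem becomes a statement about $\sum (-1)^k/(B_kB_{k+1})$ squeezed between $1/(6B_n^2)$ and $1/(6B_n^2+1)$ (up to the $+1$ in the floor argument). So after establishing $a_k+a_{k-1}=6B_k^2$ I would recompute the target bounds in $B$-notation and reduce the two telescoping inequalities to statements about $B_n^2$ and $B_{n\pm1}^2$, which can be verified using only (\ref{relation2}) and the recurrence — essentially the same algebra as in the $\sum(-1)^k/B_k^2$ theorem but with an extra factor handled by the identity just noted.

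For the lower bound I would prove the companion inequality
\begin{equation*}
\frac{1}{a_n+a_{n-1}+1}<\frac{1}{a_n}-\frac{1}{a_{n+1}}+\frac{1}{a_{n+2}+a_{n+1}+1}\,,
\end{equation*}
again by rewriting each side as $1/\bigl(a_j(a_j/(a_{j\pm1}+1)+1)\bigr)$, expanding $a_j^2/(a_{j\pm1}+1)$, and comparing the resulting error terms; the numerator inequality to check will be of the shape $(\text{error at } n+2) < (\text{error at } n)$, which should again reduce, via $a_k+a_{k-1}=6B_k^2$ and $B_{n-1}B_{n+1}=B_n^2-1$, to an elementary monotonicity statement about consecutive $B$'s. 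Summing both families of inequalities over $i\ge0$ telescopes to
\begin{equation*}
\frac{1}{a_n+a_{n-1}+1}<\sum_{k=n}^\infty\frac{(-1)^k}{a_k}<\frac{1}{a_n+a_{n-1}}
\end{equation*}
for even $n$, and negating gives the odd-$n$ case; taking reciprocals and floors yields the claimed formula. The main obstacle I anticipate is purely bookkeeping: verifying the two ``error-term'' inequalities after clearing the nested fractions, since the products $a_k$ make the intermediate expressions bulkier than in the earlier theorems. I expect the identity $a_k+a_{k-1}=6B_k^2$ (or, if that fails, its correct analogue) to be the linchpin that keeps the computation tractable, so pinning that down cleanly is the first thing I would do.
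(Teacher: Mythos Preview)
Your plan is correct and is essentially the paper's own proof: the same two telescoping inequalities (your displayed upper- and lower-bound steps are exactly the paper's (\ref{200}) and (\ref{205})), each proved by rewriting the brackets as $1/\bigl(a_j(1+a_j/a_{j\pm1})\bigr)$, expanding with $B_{n-1}B_{n+1}=B_n^2-1$, and comparing the residual ``error'' fractions. Your observation $a_k+a_{k-1}=B_k(B_{k+1}+B_{k-1})=6B_k^2$ is correct and a tidy bookkeeping shortcut the paper does not make explicit, but it does not change the argument's shape.
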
 
\begin{proof}  
First, we prove that 
\begin{multline}  
\frac{1}{B_n B_{n+1}}-\frac{1}{B_{n+1}B_{n+2}}+\frac{1}{B_{n+2}B_{n+3}+B_{n+1}B_{n+2}}\\
<\frac{1}{B_n B_{n+1}+B_{n-1}B_n}\,. 
\label{200}
\end{multline} 
By using $B_n^2=B_{n-1}B_{n+1}+1$, we have 
\begin{align*} 
&\left(\frac{1}{B_{n+1}B_{n+2}}-\frac{1}{B_{n+2}B_{n+3}+B_{n+1}B_{n+2}}\right)\\
&\quad -\left(\frac{1}{B_{n}B_{n+1}}-\frac{1}{B_{n}B_{n+1}+B_{n-1}B_{n}}\right)\\
&=\frac{1}{B_{n+1}B_{n+2}\left(1+\frac{B_{n+1}}{B_{n+3}}\right)}-\frac{1}{B_{n}B_{n+1}\left(1+\frac{B_{n+1}}{B_{n-1}}\right)}\\
&=\frac{1}{B_{n+1}B_{n+2}+(1+B_n B_{n+2})\frac{B_{n+2}}{B_{n+3}}}-\frac{1}{B_{n}B_{n+1}+(1+B_n B_{n+2})\frac{B_{n}}{B_{n-1}}}\\
&=\frac{1}{B_{n+1}B_{n+2}+B_{n}B_{n+1}+\frac{B_n+B_{n+2}}{B_{n+3}}}-\frac{1}{B_{n+1}B_{n+2}+B_{n}B_{n+1}+\frac{B_n+B_{n+2}}{B_{n-1}}}\\
&>0
\end{align*} 
since 
$$
\frac{B_n+B_{n+2}}{B_{n+3}}<\frac{B_n+B_{n+2}}{B_{n-1}}\,. 
$$ 
Second, we show that 
\begin{multline}  
\frac{1}{B_n B_{n+1}+B_{n-1}B_n+1}\\
<\frac{1}{B_n B_{n+1}}-\frac{1}{B_{n+1}B_{n+2}}+\frac{1}{B_{n+2}B_{n+3}+B_{n+1}B_{n+2}+1}\,. 
\label{205} 
\end{multline}  
We have 
\begin{align*}  
&\left(\frac{1}{B_n B_{n+1}}-\frac{1}{B_n B_{n+1}+B_{n-1}B_n+1}\right)\\
&\quad -\left(\frac{1}{B_{n+1}B_{n+2}}-\frac{1}{B_{n+2}B_{n+3}+B_{n+1}B_{n+2}+1}\right)\\
&=\frac{1}{B_n B_{n+1}\left(1+\frac{B_n B_{n+1}}{B_{n-1}B_n+1}\right)}-\frac{1}{B_{n+1}B_{n+2}\left(1+\frac{B_{n+1}B_{n+2}}{B_{n+2}B_{n+3}+1}\right)}\\
&=\frac{1}{B_n B_{n+1}+\frac{(1+B_{n-1}B_{n+1})(1+B_n B_{n+2})}{B_{n-1}B_n+1}}\\
&\quad -\frac{1}{B_{n+1}B_{n+2}+\frac{(1+B_{n}B_{n+2})(1+B_{n+1}B_{n+3})}{B_{n+2}B_{n+3}+1}}\\
&=\frac{1}{B_n B_{n+1}+B_{n+1}B_{n+2}+\frac{1+B_{n-1}B_{n+1}+B_n B_{n+2}-B_{n+1}B_{n+2}}{B_{n-1}B_n+1}}\\
&\quad -\frac{1}{B_n B_{n+1}+B_{n+1}B_{n+2}+\frac{1+B_{n}B_{n+2}+B_{n+1}B_{n+3}-B_{n}B_{n+1}}{B_{n+2}B_{n+3}+1}}\\
&>0 
\end{align*} 
since 
\begin{multline*}  
1+B_{n-1}B_{n+1}+B_n B_{n+2}-B_{n+1}B_{n+2}<0\\
<1+B_{n}B_{n+2}+B_{n+1}B_{n+3}-B_{n}B_{n+1}\,. 
\end{multline*}  
By repeating (\ref{200}) and (\ref{205}), respectively, we have 
$$
\frac{1}{B_n B_{n+1}+B_{n-1}B_n+1}<\sum_{k=n}^\infty\frac{(-1)^k}{B_k B_{k+1}}<\frac{1}{B_n B_{n+1}+B_{n-1}B_n}\,. 
$$  
\end{proof}

Similarly, we can have the following results. 

\begin{theorem}  
$$ 
\fl{\left(\sum_{k=n}^\infty\frac{(-1)^k}{B_{2 k}^2}\right)^{-1}}=
\begin{cases}  
B_{2 n}^2+B_{2 n-2}^2&\text{if $n$ is even};\\ 
-(B_{2 n}^2+B_{2 n-2}^2+1)&\text{if $n$ is odd}. 
\end{cases}  
$$ 
$$ 
\fl{\left(\sum_{k=n}^\infty\frac{(-1)^k}{B_{2 k-1}^2}\right)^{-1}}=
\begin{cases}  
B_{2 n-1}^2+B_{2 n-3}^2&\text{if $n$ is even};\\ 
-(B_{2 n-1}^2+B_{2 n-3}^2+1)&\text{if $n$ is odd}. 
\end{cases}  
$$ 
$$ 
\fl{\left(\sum_{k=n}^\infty\frac{(-1)^k}{B_{2 k-1}B_{2 k+1}}\right)^{-1}}=
\begin{cases}  
B_{2 n}^2+B_{2 n-2}^2-1&\text{if $n$ is even};\\ 
-(B_{2 n}^2+B_{2 n-2}^2)&\text{if $n$ is odd}. 
\end{cases}  
$$ 
$$ 
\fl{\left(\sum_{k=n}^\infty\frac{(-1)^k}{B_{2 k}B_{2 k+2}}\right)^{-1}}=
\begin{cases}  
B_{2 n+1}^2+B_{2 n-1}^2-1&\text{if $n$ is even};\\ 
-(B_{2 n+1}^2+B_{2 n-1}^2)&\text{if $n$ is odd}. 
\end{cases}  
$$ 
\end{theorem}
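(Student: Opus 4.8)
The plan is to prove all four identities by the two-sided telescoping argument already used for the preceding reciprocal-sum theorems, so I describe the common template and indicate only where the four cases differ. Fix one of the four sequences, write $a_k$ for its general term (so $a_k$ is one of $B_{2k}^2$, $B_{2k-1}^2$, $B_{2k-1}B_{2k+1}$, $B_{2k}B_{2k+2}$), and let $U_n$ denote the polynomial in balancing numbers occurring on the right-hand side in the even-$n$ case ($U_n=B_{2n}^2+B_{2n-2}^2$, $B_{2n-1}^2+B_{2n-3}^2$, $B_{2n}^2+B_{2n-2}^2-1$, $B_{2n+1}^2+B_{2n-1}^2-1$, respectively). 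First I would establish, for every $n\ge 1$, the pair of telescoping inequalities
\begin{equation*}
\frac{1}{a_n}-\frac{1}{a_{n+1}}+\frac{1}{U_{n+2}}<\frac{1}{U_n}
\qquad\text{and}\qquad
\frac{1}{U_n+1}<\frac{1}{a_n}-\frac{1}{a_{n+1}}+\frac{1}{U_{n+2}+1}\,.
\end{equation*}
Iterating the first and letting the number of steps tend to infinity (note $1/U_m\to 0$) gives $1/U_n>\sum_{i=0}^\infty(1/a_{n+2i}-1/a_{n+2i+1})$; iterating the second gives the same bound with $U_n$ replaced by $U_n+1$ and the inequality reversed. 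Since $\sum_{i=0}^\infty(1/a_{n+2i}-1/a_{n+2i+1})$ equals $\sum_{k=n}^\infty(-1)^k/a_k$ when $n$ is even and its negative when $n$ is odd, this yields $1/(U_n+1)<\sum_{k=n}^\infty(-1)^k/a_k<1/U_n$ for even $n$ and $-1/U_n<\sum_{k=n}^\infty(-1)^k/a_k<-1/(U_n+1)$ for odd $n$, hence the floor of the reciprocal is $U_n$ and $-(U_n+1)$, respectively --- exactly the stated formulas.

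The ingredients for verifying the two telescoping inequalities are precisely those used above: the product formula $B_{n-r}B_{n+r}=B_n^2-B_r^2$ from (\ref{relation1}), which with $r=1,2$ and $B_1=1$, $B_2=6$ gives $B_{2k-1}B_{2k+1}=B_{2k}^2-1$, $B_{2k}B_{2k+2}=B_{2k+1}^2-1$, $B_{2k-2}B_{2k+2}=B_{2k}^2-36$, and $B_{2k-3}B_{2k+1}=B_{2k-1}^2-36$; the recurrence $B_m=6B_{m-1}-B_{m-2}$ and its consequence $B_mB_{m-4}-B_{m-1}B_{m-3}=-35$ from (\ref{rec-2}); and the strict monotonicity of $(B_m)_{m\ge1}$. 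In each case one clears the two composite denominators on each side, uses these identities to put the resulting ``correction terms'' into closed form (the analogues of $36/B_{2n\pm2}$ and $(B_{2n}-36)/(B_{2n+4}+1)$ from the proof of the theorem on $\sum(-1)^k/B_{2k}$), and thereby reduces each inequality to the positivity of a single common numerator. For the two product sums there is a convenient shortcut: because $B_{2k-1}B_{2k+1}=B_{2k}^2-1$ and $B_{2k}B_{2k+2}=B_{2k+1}^2-1$, the third and fourth sums differ from the pure-square sums $\sum(-1)^k/B_{2k}^2$ and $\sum(-1)^k/B_{2k+1}^2$ only by a tail of terms of size $O(B_{2k}^{-4})$, which shifts the floor of the reciprocal down by exactly $1$ and accounts for the ``$-1$'' in the last two formulas; pinning the floor down rigorously still requires the explicit bounds above.

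The only genuine work is this last sign check. Since $a_k$ is now quadratic or quartic in the $B_m$, the correction terms are rational expressions whose numerators are polynomials of degree up to three in the balancing numbers --- in contrast to the bare constants $36$ and $-35$ that arose for $\sum(-1)^k/B_{2k}$ --- and I would need to show that each such numerator has the correct sign, and that after writing the two competing fractions as $N/(P+\varepsilon_1)$ and $N/(P+\varepsilon_2)$ with a common numerator $N>0$ one has $\varepsilon_1<\varepsilon_2$ for the lower bound and $\varepsilon_1>\varepsilon_2$ for the upper bound. I expect this bookkeeping, rather than any conceptual point, to be the main obstacle; it is routine given the three identities above together with the growth of $(B_m)$, and it is carried out case by case in exactly the manner of the proofs of the preceding theorems, which is why the statement is prefaced by ``Similarly''.
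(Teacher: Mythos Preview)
Your proposal is correct and follows exactly the approach the paper intends: the paper gives no proof at all for this theorem beyond the phrase ``Similarly, we can have the following results,'' and your two-sided telescoping template, key identities, and sign-checking program are precisely the method used in the preceding detailed proofs. The observation that $B_{2k-1}B_{2k+1}=B_{2k}^2-1$ and $B_{2k}B_{2k+2}=B_{2k+1}^2-1$ explain the ``$-1$'' shift in the last two formulas is a nice addition that the paper does not make explicit.
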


For Lucas-balancing numbers $C_n$ we have the following corresponding results.  
Notice that $C_n$ satisfies the recurrence relation 
$C_n=6 C_{n-1}-C_{n-2}$ ($n\ge 2$) with $C_0=1$ and $C_1=3$.  

\begin{theorem}  
$$ 
\fl{\left(\sum_{k=n}^\infty\frac{(-1)^k}{C_k}\right)^{-1}}=
\begin{cases}  
C_n+C_{n-1}-1&\text{if $n$ is even};\\ 
-(C_n+C_{n-1})&\text{if $n$ is odd}. 
\end{cases} 
$$ 
$$ 
\fl{\left(\sum_{k=n}^\infty\frac{(-1)^k}{C_k^2}\right)^{-1}}=
\begin{cases}  
C_n^2+C_{n-1}^2-1&\text{if $n$ is even};\\ 
-(C_n^2+C_{n-1}^2)&\text{if $n$ is odd}. 
\end{cases} 
$$ 
$$ 
\fl{\left(\sum_{k=n}^\infty\frac{(-1)^k}{C_{2 k}}\right)^{-1}}=
\begin{cases}  
C_{2 n}+C_{2 n-2}-1&\text{if $n$ is even};\\ 
-(C_{2 n}+C_{2 n-2})&\text{if $n$ is odd}. 
\end{cases}  
$$ 
$$ 
\fl{\left(\sum_{k=n}^\infty\frac{(-1)^k}{C_{2 k+1}}\right)^{-1}}=
\begin{cases}  
C_{2 n+1}+C_{2 n-1}-1&\text{if $n$ is even};\\ 
-(C_{2 n+1}+C_{2 n-1})&\text{if $n$ is odd}. 
\end{cases}  
$$ 
$$ 
\fl{\left(\sum_{k=n}^\infty\frac{(-1)^k}{C_{k}C_{k+1}}\right)^{-1}}=
\begin{cases}  
C_{n}C_{n+1}+C_{n-1}C_n-1&\text{if $n$ is even};\\ 
-(C_{n}C_{n+1}+C_{n-1}C_n)&\text{if $n$ is odd}. 
\end{cases} 
$$ 
$$ 
\fl{\left(\sum_{k=n}^\infty\frac{(-1)^k}{C_{2 k}^2}\right)^{-1}}=
\begin{cases}  
C_{2 n}^2+C_{2 n-2}^2-1&\text{if $n$ is even};\\ 
-(C_{2 n}^2+C_{2 n-2}^2)&\text{if $n$ is odd}. 
\end{cases}  
$$ 
$$ 
\fl{\left(\sum_{k=n}^\infty\frac{(-1)^k}{C_{2 k-1}^2}\right)^{-1}}=
\begin{cases}  
C_{2 n-1}^2+C_{2 n-3}^2-1&\text{if $n$ is even};\\ 
-(C_{2 n-1}^2+C_{2 n-3}^2)&\text{if $n$ is odd}. 
\end{cases}  
$$ 
$$ 
\fl{\left(\sum_{k=n}^\infty\frac{(-1)^k}{C_{2 k-1}C_{2 k+1}}\right)^{-1}}=
\begin{cases}  
C_{2 n}^2+C_{2 n-2}^2-1&\text{if $n$ is even};\\ 
-(C_{2 n}^2+C_{2 n-2}^2)&\text{if $n$ is odd}. 
\end{cases}  
$$ 
$$ 
\fl{\left(\sum_{k=n}^\infty\frac{(-1)^k}{C_{2 k}C_{2 k+2}}\right)^{-1}}=
\begin{cases}  
C_{2 n+1}^2+C_{2 n-1}^2-1&\text{if $n$ is even};\\ 
-(C_{2 n+1}^2+C_{2 n-1}^2)&\text{if $n$ is odd}. 
\end{cases}  
$$ 
\end{theorem}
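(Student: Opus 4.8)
The plan is to prove all nine identities by the single template already used above for the balancing‑number theorems, the only new ingredients being the analogues for $C_n$ of the Cassini‑type and ``skip‑by‑four'' identities. Fix one of the nine sequences, write $a_k$ for its general term, and let $M_n$ denote the value claimed on the even branch (so for the first identity $M_n=C_n+C_{n-1}-1$, for the fifth $M_n=C_nC_{n+1}+C_{n-1}C_n-1$, and so on). It suffices to prove, for all admissible $n$,
\[
\frac{1}{M_n+1}\;<\;S_n\;:=\;\sum_{i=0}^{\infty}\left(\frac{1}{a_{n+2i}}-\frac{1}{a_{n+2i+1}}\right)\;<\;\frac{1}{M_n}\,,
\]
because when $n$ is even $\sum_{k=n}^{\infty}(-1)^k/a_k=S_n$, giving floor $M_n$, while when $n$ is odd the sum equals $-S_n\in(-1/M_n,-1/(M_n+1))$, whose reciprocal lies in $(-(M_n+1),-M_n)$ and so has floor $-(M_n+1)$ — exactly the stated odd branch. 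Thus the even and odd cases are dispatched together, and nothing beyond the two‑sided bound on $S_n$ is needed.

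For that bound one argues exactly as in the proofs of Theorem~\ref{b-lb} and the theorems following it. Writing $S_n=\tfrac1{a_n}-\tfrac1{a_{n+1}}+S_{n+2}$, it is enough to establish the two one‑step inequalities
\[
\frac{1}{a_n}-\frac{1}{a_{n+1}}+\frac{1}{M_{n+2}}\;<\;\frac{1}{M_n}
\qquad\text{and}\qquad
\frac{1}{M_n+1}\;<\;\frac{1}{a_n}-\frac{1}{a_{n+1}}+\frac{1}{M_{n+2}+1}\,,
\]
and then iterate, using $1/M_{n+2i}\to0$ and $1/(M_{n+2i}+1)\to0$ to pass to the limit (the monotone partial sums squeeze $S_n$ from both sides). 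After clearing denominators in the manner of \eqref{eq10}--\eqref{eq20} and \eqref{45}--\eqref{56}, each one‑step inequality reduces to a sign statement about a difference of two fractions having the same leading part, and that sign is controlled by the relevant quadratic identity for $C_n$. For the identities involving $C_k$, $C_k^2$, $C_kC_{k+1}$ one uses $C_{n-1}C_{n+1}=C_n^2+8$ from \eqref{relation2}; the product identities $C_{2k-1}C_{2k+1}=C_{2k}^2+8$ and $C_{2k}C_{2k+2}=C_{2k+1}^2+8$ (special cases of \eqref{relation1}) reduce the eighth and ninth identities to the same shape as those for $C_{2k}^2$; and for the identities indexed by even or odd subscripts ($C_{2k}$, $C_{2k-1}$, $C_{2k}^2$, $C_{2k-1}^2$, \dots) one uses, in place of \eqref{rec-2}, the skip‑by‑four relation $C_nC_{n-4}-C_{n-1}C_{n-3}=280$, obtained once and for all from the Binet form $C_n=(\alpha^n+\beta^n)/2$ together with $\alpha\beta=1$, $\alpha^2+\beta^2=34$, $\alpha^4+\beta^4=1154$.

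The point requiring real care — and where the Lucas‑balancing case genuinely differs from the balancing case — is the \emph{direction} of these sign inequalities. For $B_n$ one has $B_{n-1}B_{n+1}=B_n^2-1<B_n^2$, whereas for $C_n$ one has $C_{n-1}C_{n+1}=C_n^2+8>C_n^2$; this reversal is precisely what turns the ``$+1$'' appearing in the balancing identities into the ``$-1$'' on the even branch here and shifts the extra unit to the odd branch, and one must verify that every one‑step inequality still comes out strict in the required direction with the new constants $+8$ (respectively $-279=-280+1$ in the skip‑by‑four computations, which replaces the $+36$ of \eqref{45}--\eqref{56}). The only remaining labour is the bookkeeping of small‑index cases — each bound is asserted only for $n$ past a small threshold, where elementary estimates of the type $C_{l(n+1)}-C_l^2>C_{l(n-1)}+1$ are invoked — and this is routine and identical in spirit to the preceding theorems, so we record only the representative computations for each of the nine identities and omit the rest.
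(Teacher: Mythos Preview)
Your proposal is correct and takes essentially the same approach as the paper, which in fact states this theorem without proof as the Lucas-balancing analogue of the preceding $B_n$ results; your sketch supplies exactly the intended argument, and you correctly identify the crucial sign reversal coming from $C_{n-1}C_{n+1}=C_n^2+8$ (versus $B_{n-1}B_{n+1}=B_n^2-1$) as the reason the $\pm 1$ correction swaps between the even and odd branches. One small slip: in the skip-by-four computation the constant replacing the paper's $+36$ should be $-288=-280-8$ rather than $-279=-280+1$, since the Cassini-type term for $C_n$ contributes $-8$, not $+1$; this does not affect the direction of any inequality or the validity of the argument.
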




\begin{thebibliography}{99} 

\bibitem{AD1} 
T. Agoh and K. Dilcher, {\em Convolution identities and lacunary recurrences for
Bernoulli numbers}, 
J. Number Theory {\bf 124} (2007), 105--122.

\bibitem{behera-panda}  
A. Behera and G. K. Panda, {\em On the square roots of triangular numbers}, 
Fibonacci Quart. {\bf 37} (1999) 98--105.  

\bibitem{Dilcher} 
K. Dilcher, {\em Sums of products of Bernoulli numbers}, 
J. Number Theory {\bf 60} (1996), 23--41.

\bibitem{Finkelstein} 
R. Finkelstein, {\em The house problem}, 
Amer. Math. Monthly {\bf 72} (1965), 1082--1088.

\bibitem{Holliday} 
S. H. Holliday and T. Komatsu,  {\em On the sum of reciprocal generalized Fibonacci numbers}, 
Integers {\bf 11} (2011) 441--455. 

\bibitem{Kamano1} 
K. Kamano, {\em Sums of products of Bernoulli numbers, including poly-Bernoulli numbers}, 
J. Integer Seq. {\bf 13} (2010) Article 10.5.2. 

\bibitem{Kamano2} 
K. Kamano, {\em Sums of products of hypergeometric Bernoulli numbers}, 
J. Number Theory {\bf 130} (2010) 2259--2271. 

\bibitem{ORH} 
O. Karaatli, R. Keskin and H. Zhu, 
{\em Infinitely many positive integer solutions of the quadratic Diophantine equation $x^2-8 B_n x y-2y^2=\pm 2 r$}, 
Ir. Math. Soc. Bull. {\bf 73} (2014), 29--45. 

\bibitem{KK} 
R. Keskin and O. Karaatli, {\em 
Some new properties of balancing numbers and square triangular numbers}, 
J. Integer Seq. {\bf 15} (2012), Article 12.1.4.

\bibitem{Komatsu2011} 
T. Komatsu,  {\em On the nearest integer of the sum of reciprocal Fibonacci numbers}, 
Aportaciones Matematicas Investigacion, {\bf 20} (2011), 171--184.  

\bibitem{Komatsu3}  
T. Komatsu, {\em Hypergeometric Cauchy numbers},  
Intern. J. Number Theory {\bf 9} (2013), 545--560. 

\bibitem{Komatsu4} 
T. Komatsu, {\em Sums of products of Cauchy numbers, including poly-Cauchy numbers}, 
J. Discrete Math. {\bf 2013} (2013), Article ID 373927, (10 pages). 

\bibitem{Komatsu2014b}  
T. Komatsu, {\em Sums of products of Cauchy polynomials, including poly-Cauchy polynomials}, 
J. Algerian Math. Soc. {\bf 1} (2014), Article \#2, (12 pages). 

\bibitem{Komatsu2014}  
T. Komatsu, {\em Convolution identities for Cauchy numbers}, 
Acta Math. Hungar. {\bf 144} (2014), 76--91. 

\bibitem{Komatsu2015} 
T. Komatsu, {\em Sums of products of Cauchy numbers, including generalized poly-Cauchy numbers}, 
Tokyo J. Math. {\bf 38} (2015) 45--74.

\bibitem{Komatsu6} 
T. Komatsu, {\em Convolution identities for Cauchy numbers of the
second kind},  
Kyushu. J. Math. {\bf 69} (2015), 125--144. 

\bibitem{Komatsu2010} 
T. Komatsu and V. Laohakosol,  {\em On the sum of reciprocals of numbers satisfying a recurrence relation of order $s$}, 
J. Integer Seq. {\bf 13} (2010), Article 10.5.8 (9 pages). 

\bibitem{KS}  
T. Komatsu and Y. Simsek, {\em Third and higher order convolution identities for Cauchy numbers}, 
Filomat 30 (2016), 1053--1060.  

\bibitem{KS2014}  
T. Komatsu and L. Szalay, {\em Balancing with binomial coefficients},  
Intern. J. Number Theory {\bf 10} (2014), 1729--1742.  

\bibitem{Koshy}  
T. Koshy, {\em 
Fibonacci and Lucas numbers with applications},  
John Wiley and Sons, 2001.  

\bibitem{Kuhapatanakul} 
K. Kuhapatanakul,  {\em On the sums of reciprocal generalized Fibonacci numbers}, J. Integer Seq. {\bf 16} (2013), Article 13.7.1 (8 pages).

\bibitem{Liptai2004}  
K. Liptai,  {\em Fibonacci Balancing numbers},  
Fibonacci Quart. {\bf 42} (2004), 330--340. 

\bibitem{LLPS} 
K. Liptai, F. Luca, \'A. Pint\'er and L. Szalay, {\em 
Generalized balancing numbers},  
Indag. Math. (N.S.) {\bf 20} (2009) 87--100. 

\bibitem{Ohtsuka} 
H. Ohtsuka and S. Nakamura,  {\em On the sum of reciprocal Fibonacci numbers}, 
Fibonacci Quart. {\bf 46/47} (2008/2009), 153--159.

\bibitem{panda2009} 
G. K. Panda,  {\em Some fascinating properties of balancing numbers},  In Proc. of Eleventh Internat. Conference on Fibonacci Numbers and Their Applications, Cong. Numerantium {\bf 194} (2009), 185--189.

\bibitem{PR} 
B. K. Patel and P. K. Ray, 
{\em The period, rank and order of the sequence of balancing numbers modulo $m$}, Math. Rep. (Bucur.) {\bf 18}, No.3 (2016), Article No.9.  

\bibitem{ray2009} 
P. K. Ray, {\em Balancing and cobalancing numbers}, 
Ph.D. Thesis, National Institute of Technology, Rourkela, (2009), 100 pages; available at http://ethesis.nitrkl.ac.in/2750/1/Ph.D.\_Thesis\_of\_P.K.\_Ray..pdf

\bibitem{ray2012}  
P. K. Ray, {\em Curious congruences for balancing numbers}, 
Int. J. Contemp. Math. Sci. {\bf 7} (2012), 881--889.  

\bibitem{SK} 
Z. \c{S}iar and R. Keskin, {\em 
Some new identities concerning generalized Fibonacci and Lucas numbers}, 
Hacet. J. Math. Stat. {\bf 42} 
(2013), 211--222.

\bibitem{Zhang} 
G. J. Zhang,  {\em The infinite sum of reciprocal of Fibonacci numbers},  
J. Math. Res. Expo. {\bf 31} (2011), 1030--1034. 

\bibitem{Zhao}  
F.-Z. Zhao, {\em Sums of products of Cauchy numbers}, 
Discrete Math. {\bf 309} (2009), 3830--3842.

\end{thebibliography}
\end{document}